\newtheorem{theorem}{Theorem}
\newtheorem{lemma}[theorem]{Lemma}
\newtheorem{remark}[theorem]{Remark}
\newtheorem{corollary}[theorem]{Corollary}
\newtheorem{proposition}[theorem]{Proposition}
\newtheorem{example}[theorem]{Example}
\newcommand{\tto}{\twoheadrightarrow}
\font\sc=rsfs10
\newcommand{\cG}{\sc\mbox{G}\hspace{1.0pt}}
\begin{document}

\title{$G(\ell,k,d)$-modules via groupoids}
\author{Volodymyr Mazorchuk and Catharina Stroppel}

\begin{abstract}
In this note we describe a seemingly new approach to the complex representation theory 
of the wreath product $G\wr S_d$ where $G$ is a finite abelian group. The approach is 
motivated by an appropriate version of Schur-Weyl duality. We construct a 
combinatorially defined groupoid in which all endomorphism algebras are direct 
products of symmetric groups and prove that the groupoid algebra is isomorphic 
to the group algebra of $G\wr S_d$. This directly implies a classification of simple 
modules. As an application, we get a Gelfand model for $G\wr S_d$ from the classical 
involutive Gelfand model for the symmetric group. We describe the Schur-Weyl duality 
which motivates our approach and relate it to various Schur-Weyl dualities in the 
literature. Finally, we discuss an extension of these methods to all complex reflection 
groups of type $G(\ell,k,d)$.
\end{abstract}
\maketitle

\section{Introduction}\label{s1}
A very important class of finite groups are wreath products of the form $G\wr S_d$, where $S_d$ is the symmetric group
and $G$ is abelian. The study of its representation theory is a classical topic. The first major results, e.g. the classification of simple modules, were 
already obtained by Specht in his thesis \cite{Sp}. Since then the theory was revised on 
various occasions, in particular in case of $G$ being a cyclic group, see \cite{Ca}, \cite{Ke}, 
\cite{Os}, \cite{SUI} and references therein. This note contributes yet 
another approach which, from our point of view, simplifies the theory and makes 
several results, in particular, on the combinatorics of simple modules and on Gelfand 
models,  especially transparent.

Our approach originates in an attempt to understand various Schur-Weyl dualities 
appearing in  \cite{Wang}, \cite{ES1}, \cite{SaSt} in which on one side we have 
an action of a direct product of general linear groups while on the
other side we have a non-faithful action of the Coxeter group of type $B$, 
respectively $D$. While looking for similar results in the literature, we discovered
that analogous Schur-Weyl dualities already appeared in \cite{Re} and also
in the context of Ariki-Koike 
algebras in \cite{ATY}, \cite{Hu}, \cite{SS}, \cite{Sh}. 
These dualities have a common structure which suggest the substitution of the Coxeter group of type $B$ 
by a certain combinatorially defined groupoid, see Subsection~\ref{s2.2}
for a precise definition of the latter. The main observation of the present note is 
that this groupoid can be used to describe the representation theory of the Coxeter 
group of type $B$ and, more generally, of the wreath products of the form 
$\mathbf{C}_\ell\wr S_d$, where $\mathbf{C}_\ell$ is a cyclic group of order $\ell$ 
or any complex reflection group of type $G(\ell,k,d)$.
The transparent combinatorial structure of the groupoid proposes a straightforward 
reduction of all statement to type $A$, that is to the case of direct products of symmetric groups. 
An explicit construction of all irreducible representations for $G(\ell,k,d)$ can be found in 
Proposition~\ref{prop9} and Theorem~\ref{thm77}. The symmetric group $S_d$, the Weyl group of type $B_d$ 
and the Weyl group of type $D_d$ are the special examples $G(1,1,d)$,  $G(2,1,d)$,  and $G(2,2,d)$, respectively. 

In Subsection~\ref{s2.2} we define our main object of study, that is a finite groupoid 
$\cG_{(\ell,d)}$, and in Subsection~\ref{s2.6} we show that its algebra (over $\mathbb{C}$) 
is isomorphic to the group algebra of $\mathbf{C}_\ell\wr S_d$. Consequently, we immediately 
get a classification and explicit construction of simple $\mathbf{C}_\ell\wr S_d$-modules
(see Subsection~\ref{s2.4}) which does not even involve any counting of the number of 
conjugacy classes (the latter being one of the ingredients in all classical approaches). 
The indexing set of simple modules is the set of $\ell$-multi-partitions of $d$. Moreover, our 
construction immediately gives a basis of all simple modules indexed by all 
standard $\ell$-multi-tableaux of the corresponding type, see Subsection~\ref{s2.4}. 
We connect our construction of simple
modules to the one from \cite{SUI} which uses induction from generalized Young subgroups. 
Finally, we also provide in Subsection~\ref{s2.8} a straightforward construction of an involutive Gelfand model 
for $\mathbf{C}_\ell\wr S_d$ (that is a multiplicity free direct sum of all simple modules), 
significantly simplifying the previous approaches from 
\cite{APR2}, \cite{CF}. In Subsection~\ref{s3.4} we give a short proof of the Schur-Weyl duality 
which motivated our approach (as we mentioned before, several (quantum) versions of this 
duality exist in the literature). In Subsection~\ref{s3.8} we use this duality to justify 
that our results naturally extent to the case $G\wr S_d$, where $G$ is any finite abelian group.
Finally, in Section~\ref{s4} we extend most of the results to all complex reflection
groups $G(\ell,k,d)$. 

We note that our approach generalizes to the quantum group setting. 
However, to prevent that the main idea of the proof is buried 
in technical details, we stick to the non-quantized situation.
\vspace{2mm}

\noindent
{\bf Acknowledgements.} An essential part of the research was done during the visit of 
both authors to the Max Planck Institute for Mathematics in Bonn. We gratefully acknowledge 
hospitality and support by the MPIM. For the first author the research was partially supported 
by the Swedish Research Council, Knut and Alice Wallenbergs Stiftelse  and the 
Royal Swedish Academy of Sciences. We thank Daniel Tubbenhauer and Stuart Margolis for comments.

\section{Modules over generalized symmetric groups}\label{s2}

\subsection{Generalized symmetric groups}\label{s2.1}

We denote by $\mathbb{Z}\supseteq\mathbb{Z}_{\geq 0}\supseteq\mathbb{Z}_{>0}$ the sets of all integers, 
all nonnegative integers and all positive integers, respectively. For $n\in \mathbb{Z}_{\geq 0}$ we 
denote by $\underline{n}$ the set $\{1,2,\dots,n\}$ (with  $\underline{0}=\varnothing$). 
Throughout the paper we fix as ground field the field $\mathbb{C}$ of complex numbers and abbreviate  
$\otimes_{\mathbb{C}}$ as $\otimes$.

For $\ell\in \mathbb{Z}_{>0}$, let $\mathbf{C}_\ell$ be the group of all complex $\ell$-th roots 
of unity. The group $\mathbf{C}_\ell$ is cyclic and we fix some generator $\xi_\ell\in\mathbf{C}_\ell$, 
that is, a primitive $\ell$-th root of unity.

Given a set $X$, we denote by $S(X)$ the symmetric group on $X$ and abbreviate $S_d:=S(\underline{d})$ for any $d\in  \mathbb{Z}_{>0}$. For $\mathbf{d}=(d_1,d_2,\dots,d_k)\in \mathbb{Z}_{\geq 0}^k$ set
\begin{eqnarray}
\label{sd}
S_{\mathbf{d}}:=S_{d_1}\times S_{d_2}\times \dots\times S_{d_k}. 
\end{eqnarray}
Given $n\in\mathbb{Z}_{\geq 0}$ and a partition $\mu\vdash n$, we denote by $\mathscr{S}_{\mu}$ 
the (irreducible) Specht $S_n$-module corresponding to $\mu$. For 
$\mathbf{n}=(n_1,n_2,\dots,n_k)\in \mathbb{Z}_{\geq 0}^k$ and a multi-partition
$\boldsymbol{\mu}=(\mu_1,\mu_2,\dots,\mu_k)$ such that $\mu_i\vdash n_i$ for all $i$, 
we denote by $\mathscr{S}_{\boldsymbol{\mu}}$ the $S_{\mathbf{n}}$-module 
$\mathscr{S}_{\mu_1}\otimes\mathscr{S}_{\mu_2}\otimes\dots
\otimes\mathscr{S}_{\mu_k}$.

From now on we {\bf fix} $d\in\mathbb{Z}_{\geq 0}$  and $\ell\in \mathbb{Z}_{>0}$ and consider the wreath product $S(\ell,d):=\mathbf{C}_\ell\wr S_d$, 
also known as a {\em generalized symmetric group}. The group $S(\ell,d)$ is naturally 
identified with the group of all complex $d\times d$-matrices $X$ which satisfy the following two conditions:
\begin{itemize}
\item Each row and each column of $X$ contains exactly one non-zero entry.
\item Each non-zero entry of $X$ is an element of $\mathbf{C}_\ell$.
\end{itemize}
We have $|S(\ell,d)|=\ell^d\cdot d!$. The group $S(\ell,d)$
is a complex reflection group usually denoted by $G(\ell,1,n)$.

The group $S(\ell,d)$ has a presentation with generators $s_0,s_1,\dots,s_{d-1}$ and relations
\begin{equation}\label{eq3}
\begin{array}{rclr}
s_0^{\ell}&=&e;\\
s_i^2&=&e,&  i=1,2,\dots,d-1;\\
s_0s_1s_0s_1&=&s_1s_0s_1s_0;\\ 
s_is_{i+1}s_i&=&s_{i+1}s_i s_{i+1},& i=1,2,\dots,d-2;\\
s_is_j&=&s_js_i,& |i-j|>1,\,\,\, i,j=0,1,\dots,d-1.
\end{array}
\end{equation}
An isomorphism with the earlier description
is given by sending $s_0$ to the diagonal $d\times d$-matrix in 
which the $(1,1)$-entry is $\xi_{\ell}$ and all other diagonal entries are equal to $1$, and 
sending $s_i$ for $i=1,2,\dots,d-1$ to the permutation matrix corresponding to the transposition $(i,i+1)$.

Some classical special cases: The group $S(1,d)\cong S_d$ is the Weyl group of type $A_{d-1}$ 
and $S(2,d)$ is the Weyl group of type $B_d$ and $C_d$. 

\subsection{The groupoid $\cG_{(\ell,d)}$}\label{s2.2}

Consider a category $\cG_{(\ell,d)}$ defined as follows:
\begin{itemize}
\item Objects of $\cG_{(\ell,d)}$ are all maps $f:\underline{d}\to\underline{\ell}$.
\item For two objects $f$ and $g$ the set of morphisms $\cG_{(\ell,d)}(f,g)$ consists of all bijections
$\sigma:\underline{d}\to\underline{d}$ such that $g\circ \sigma=f$.
\item The identity morphism $e_f\in\cG_{(\ell,d)}(f,f)$ is the identity map 
$\mathrm{Id}_{\underline{d}}:\underline{d}\to\underline{d}$.
\item Composition of morphisms is given by composition of maps.
\end{itemize}
It is convenient to think of objects in $\cG_{(\ell,d)}$ as $\underline{\ell}$-colorings of elements 
in $\underline{d}$, that is, as ordered sequences of $d$ dots colored in $\ell$ colors. Then 
morphisms in $\cG_{(\ell,d)}$ are color preserving bijections. We usually 
represent them in terms of colored permutation diagrams (read from top to bottom), see Figure~\ref{fig2}.

\begin{example}\label{ex1}
{\rm
Let $d=2$ and $\ell=2$. We depict colors as follows: $1=\color{red}{\text{red}}$ and 
$2=\color{blue}{\text{blue}}$. Then $\cG_{(\ell,d)}$ has four objects, namely
\begin{displaymath}
\begin{picture}(30,5)
\put(05,02.50){\makebox(0,0)[cc]{\color{red}{$\circ$}}}
\put(25,02.50){\makebox(0,0)[cc]{\color{red}{$\circ$}}}
\end{picture}\qquad\qquad\qquad
\begin{picture}(30,5)
\put(05,02.50){\makebox(0,0)[cc]{\color{red}{$\circ$}}}
\put(25,02.50){\makebox(0,0)[cc]{\color{blue}{$\bullet$}}}
\end{picture}\qquad\qquad\qquad
\begin{picture}(30,5)
\put(05,02.50){\makebox(0,0)[cc]{\color{blue}{$\bullet$}}}
\put(25,02.50){\makebox(0,0)[cc]{\color{red}{$\circ$}}}
\end{picture}\qquad\qquad\qquad
\begin{picture}(30,5)
\put(05,02.50){\makebox(0,0)[cc]{\color{blue}{$\bullet$}}}
\put(25,02.50){\makebox(0,0)[cc]{\color{blue}{$\bullet$}}}
\end{picture}
\end{displaymath}
and the elements of $\cG_{(\ell,d)}(f,g)$ are given in Figure~\ref{fig2}
(for convenience, all red strands are dashed and all red points are circled).
\begin{figure}
\begin{displaymath}
\begin{array}{c||c|c|c|c}
g\backslash f&\begin{picture}(30,5)
\put(05,02.50){\makebox(0,0)[cc]{\color{red}{$\circ$}}}
\put(25,02.50){\makebox(0,0)[cc]{\color{red}{$\circ$}}}
\end{picture}&
\begin{picture}(30,5)
\put(05,02.50){\makebox(0,0)[cc]{\color{red}{$\circ$}}}
\put(25,02.50){\makebox(0,0)[cc]{\color{blue}{$\bullet$}}}
\end{picture}&
\begin{picture}(30,5)
\put(05,02.50){\makebox(0,0)[cc]{\color{blue}{$\bullet$}}}
\put(25,02.50){\makebox(0,0)[cc]{\color{red}{$\circ$}}}
\end{picture}&
\begin{picture}(30,5)
\put(05,02.50){\makebox(0,0)[cc]{\color{blue}{$\bullet$}}}
\put(25,02.50){\makebox(0,0)[cc]{\color{blue}{$\bullet$}}}
\end{picture}\\
\hline\hline
\begin{picture}(30,30)
\put(05,02.50){\makebox(0,0)[cc]{\color{red}{$\circ$}}}
\put(25,02.50){\makebox(0,0)[cc]{\color{red}{$\circ$}}}
\end{picture}&
\begin{picture}(80,20)
\put(05,02.50){\makebox(0,0)[cc]{\color{red}{$\circ$}}}
\put(25,02.50){\makebox(0,0)[cc]{\color{red}{$\circ$}}}
\put(05,17.50){\makebox(0,0)[cc]{\color{red}{$\circ$}}}
\put(25,17.50){\makebox(0,0)[cc]{\color{red}{$\circ$}}}
\put(55,02.50){\makebox(0,0)[cc]{\color{red}{$\circ$}}}
\put(75,02.50){\makebox(0,0)[cc]{\color{red}{$\circ$}}}
\put(55,17.50){\makebox(0,0)[cc]{\color{red}{$\circ$}}}
\put(75,17.50){\makebox(0,0)[cc]{\color{red}{$\circ$}}}
\color{red}{\dottedline{1}(05.00,02.50)(05.00,17.50)
\dottedline{1}(25.00,02.50)(25.00,17.50)
\dottedline{1}(55.00,02.50)(75.00,17.50)
\dottedline{1}(75.00,02.50)(55.00,17.50)}
\end{picture}
&&&\\
\hline
\begin{picture}(30,30)
\put(05,02.50){\makebox(0,0)[cc]{\color{red}{$\circ$}}}
\put(25,02.50){\makebox(0,0)[cc]{\color{blue}{$\bullet$}}}
\end{picture}&&
\begin{picture}(30,20)
\color{red}{
\put(05,02.50){\makebox(0,0)[cc]{$\circ$}}
\put(05,17.50){\makebox(0,0)[cc]{$\circ$}}
\dottedline{1}(05.00,02.50)(05.00,17.50)}
\color{blue}{
\put(25,02.50){\makebox(0,0)[cc]{$\bullet$}}
\put(25,17.50){\makebox(0,0)[cc]{$\bullet$}}
\drawline(25.00,02.50)(25.00,17.50)}
\end{picture}
&
\begin{picture}(30,20)
\color{blue}{
\put(05,17.50){\makebox(0,0)[cc]{$\bullet$}}
\put(25,02.50){\makebox(0,0)[cc]{$\bullet$}}
\drawline(25.00,02.50)(05.00,17.50)}
\color{red}{
\put(02,02.50){\makebox(0,0)[cc]{$\circ$}}
\put(22,17.50){\makebox(0,0)[cc]{$\circ$}}
\dottedline{1}(02,02.50)(22,17.50)}
\end{picture}
&\\
\hline
\begin{picture}(30,30)
\put(05,02.50){\makebox(0,0)[cc]{\color{blue}{$\bullet$}}}
\put(25,02.50){\makebox(0,0)[cc]{\color{red}{$\circ$}}}
\end{picture}&&
\begin{picture}(30,20)
\color{red}{
\put(05,17.50){\makebox(0,0)[cc]{$\circ$}}
\put(25,02.50){\makebox(0,0)[cc]{$\circ$}}
\dottedline{1}(25.00,02.50)(05.00,17.50)}
\color{blue}{
\put(02,02.50){\makebox(0,0)[cc]{$\bullet$}}
\put(22,17.50){\makebox(0,0)[cc]{$\bullet$}}
\drawline(02,02.50)(22,17.50)}
\end{picture}
&
\begin{picture}(30,20)
\color{blue}{
\put(05,02.50){\makebox(0,0)[cc]{$\bullet$}}
\put(05,17.50){\makebox(0,0)[cc]{$\bullet$}}
\drawline(05.00,02.50)(05.00,17.50)}
\color{red}{
\put(25,02.50){\makebox(0,0)[cc]{$\circ$}}
\put(25,17.50){\makebox(0,0)[cc]{$\circ$}}
\dottedline{1}(25.00,02.50)(25.00,17.50)}
\end{picture}
&\\
\hline
\begin{picture}(30,30)
\put(05,02.50){\makebox(0,0)[cc]{\color{blue}{$\bullet$}}}
\put(25,02.50){\makebox(0,0)[cc]{\color{blue}{$\bullet$}}}
\end{picture}
&&&&
\begin{picture}(80,20)
\put(05,02.50){\makebox(0,0)[cc]{\color{blue}{$\bullet$}}}
\put(25,02.50){\makebox(0,0)[cc]{\color{blue}{$\bullet$}}}
\put(05,17.50){\makebox(0,0)[cc]{\color{blue}{$\bullet$}}}
\put(25,17.50){\makebox(0,0)[cc]{\color{blue}{$\bullet$}}}
\put(55,02.50){\makebox(0,0)[cc]{\color{blue}{$\bullet$}}}
\put(75,02.50){\makebox(0,0)[cc]{\color{blue}{$\bullet$}}}
\put(55,17.50){\makebox(0,0)[cc]{\color{blue}{$\bullet$}}}
\put(75,17.50){\makebox(0,0)[cc]{\color{blue}{$\bullet$}}}
\color{blue}{\drawline(05.00,02.50)(05.00,17.50)
\drawline(25.00,02.50)(25.00,17.50)
\drawline(55.00,02.50)(75.00,17.50)
\drawline(75.00,02.50)(55.00,17.50)}
\end{picture}
\\
\end{array}
\end{displaymath}
\caption{The sets of morphisms $\cG_{(\ell,d)}(f,g)$ from Example~\ref{ex1}.}\label{fig2}
\end{figure}
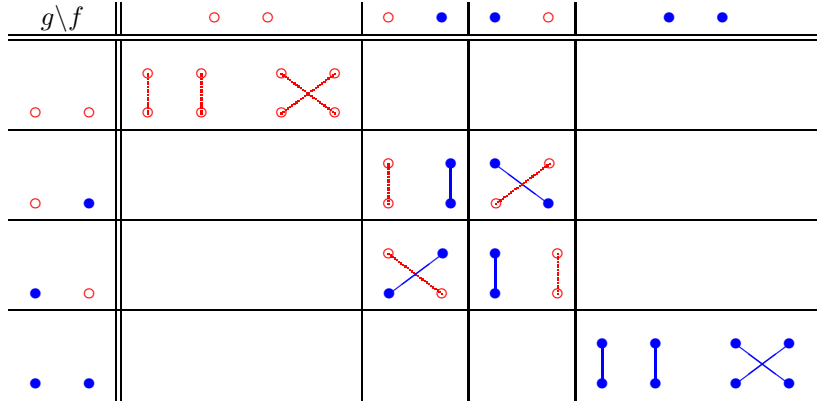
}
\end{example}

For an object $f\in \cG_{(\ell,d)}$ the {\em type} of $f$ is defined as 
\begin{displaymath}
\boldsymbol{\lambda}_f:=(\lambda_1,\lambda_2,\dots,\lambda_\ell) 
\end{displaymath}
where $\lambda_i:=|\{x\in\underline{d}\mid  f(x)=i\}|$ for $i=1,2,\dots,\ell$. 
As $\lambda_1+\lambda_2+\dots+\lambda_\ell=d$ and all $\lambda_i\in\mathbb{Z}_{\geq 0}$,
the type $\boldsymbol{\lambda}_f$ is a {\em composition of $d$ with $\ell$ parts}. We denote by
$\Lambda(\ell,d)$ the set of all compositions of $d$ with $\ell$ parts, that is all 
$(a_1,a_2,\dots,a_\ell)\in \mathbb{Z}_{\geq 0}^{\ell}$ such that $a_1+a_2+\dots+a_\ell=d$.
Then $\Lambda(\ell,d)$ is exactly the set of all possible types for
objects in $\cG_{(\ell,d)}$.

\begin{proposition}\label{prop3}
Let $f,g\in \cG_{(\ell,d)}$.

\begin{enumerate}[$($i$)$]
\item\label{prop3.1} We have $\cG_{(\ell,d)}(f,g)\neq \varnothing$ if and only if $\boldsymbol{\lambda}_f=\boldsymbol{\lambda}_g$.
\item\label{prop3.2} If $\boldsymbol{\lambda}_f=\boldsymbol{\lambda}_g=
(\lambda_1,\lambda_2,\dots,\lambda_\ell)=:\boldsymbol{\lambda}$, 
then $\displaystyle |\cG_{(\ell,d)}(f,g)|=\prod_{i=1}^\ell\lambda_i!=:\boldsymbol{\lambda}!$.
\item\label{prop3.3}  We have $\displaystyle \left|\coprod_{f_1,f_2\in\cG_{(\ell,d)}}\cG_{(\ell,d)}(f_1,f_2)\right|
=\ell^d\cdot d!$, where ${f_1,f_2}\in  \cG_{(\ell,d)}$.
\end{enumerate}
\end{proposition}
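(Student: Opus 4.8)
The plan is to work directly from the definition of a morphism in $\cG_{(\ell,d)}$. A morphism $\sigma\in\cG_{(\ell,d)}(f,g)$ is a bijection $\sigma\colon\underline{d}\to\underline{d}$ with $g\circ\sigma=f$, and the first (and essentially only substantive) observation is that this equality is equivalent to the statement that, for every color $i\in\underline{\ell}$, the bijection $\sigma$ restricts to a bijection from the fibre $f^{-1}(i)$ onto the fibre $g^{-1}(i)$. Hence specifying a morphism $f\to g$ is the same as specifying, independently for each $i$, a bijection $f^{-1}(i)\to g^{-1}(i)$, and by definition $|f^{-1}(i)|=(\boldsymbol{\lambda}_f)_i$ and $|g^{-1}(i)|=(\boldsymbol{\lambda}_g)_i$.

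From this, part~(i) is immediate: such a collection of bijections exists precisely when $|f^{-1}(i)|=|g^{-1}(i)|$ for all $i$, i.e. when $\boldsymbol{\lambda}_f=\boldsymbol{\lambda}_g$. Part~(ii) is the corresponding count: if $\boldsymbol{\lambda}_f=\boldsymbol{\lambda}_g=\boldsymbol{\lambda}=(\lambda_1,\dots,\lambda_\ell)$, then for each $i$ there are exactly $\lambda_i!$ bijections $f^{-1}(i)\to g^{-1}(i)$, and multiplying over the independent choices gives $|\cG_{(\ell,d)}(f,g)|=\prod_{i=1}^\ell\lambda_i!=\boldsymbol{\lambda}!$.

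For part~(iii) I would prefer a short bijective argument rather than a summation. An element of $\coprod_{f_1,f_2\in\cG_{(\ell,d)}}\cG_{(\ell,d)}(f_1,f_2)$ is the same thing as a triple $(f_1,f_2,\sigma)$ with $f_1,f_2\colon\underline{d}\to\underline{\ell}$, $\sigma$ a bijection of $\underline{d}$, and $f_2\circ\sigma=f_1$; since the last condition determines $f_1$ from $(f_2,\sigma)$, the assignment $(f_1,f_2,\sigma)\mapsto(f_2,\sigma)$ is a bijection onto the set of all pairs consisting of a map $\underline{d}\to\underline{\ell}$ together with a bijection of $\underline{d}$, and the latter set has $\ell^d\cdot d!$ elements. (Alternatively, one combines parts~(i) and~(ii): the number of objects of a fixed type $\boldsymbol{\lambda}\in\Lambda(\ell,d)$ is the multinomial coefficient $d!/\boldsymbol{\lambda}!$, so the total is $\sum_{\boldsymbol{\lambda}\in\Lambda(\ell,d)}(d!/\boldsymbol{\lambda}!)^2\,\boldsymbol{\lambda}!=d!\sum_{\boldsymbol{\lambda}}d!/\boldsymbol{\lambda}!=\ell^d\,d!$ by the multinomial theorem.)

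I do not expect any genuine obstacle here: once morphisms are understood fibre-by-fibre, all three parts are elementary. The only spots asking for a little care are the verification that $g\circ\sigma=f$ really is the fibrewise condition (a one-line unwinding), and the bookkeeping in part~(iii) — either keeping the identification with the pairs $(f_2,\sigma)$ clean, or, on the alternative route, recording the identity $\sum_{\boldsymbol{\lambda}\in\Lambda(\ell,d)}d!/\boldsymbol{\lambda}!=\ell^d$.
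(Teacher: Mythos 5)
Your proof is correct and follows essentially the same route as the paper: parts (i) and (ii) are the direct fibrewise unwinding of the definition, and your identification of a morphism with the pair $(f_2,\sigma)$ is exactly the paper's observation that morphisms correspond bijectively to colored permutations of $d$ dots with $\ell$ colors, of which there are $\ell^d\cdot d!$.
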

\begin{proof}
Claims~\eqref{prop3.1} and \eqref{prop3.2} follow directly from the definitions.
Claim~\eqref{prop3.3} follows from the observation that there is a natural bijection 
between the set of all morphisms in $\cG_{(\ell,d)}$ and the set of all colored permutations of $d$ dots with $\ell$ colors. 
\end{proof}

Note that $\cG_{(\ell,d)}$ is a groupoid. For $\boldsymbol{\lambda}\in \Lambda(\ell,d)$ denote by
$\cG_{(\ell,d)}^{\boldsymbol{\lambda}}$ the full subcategory of $\cG_{(\ell,d)}$ generated by all 
objects of type $\boldsymbol{\lambda}$. Then $\cG_{(\ell,d)}^{\boldsymbol{\lambda}}$ is a 
subgroupoid and is a connected component in $\cG_{(\ell,d)}$ in the sense that 
$\cG_{(\ell,d)}^{\boldsymbol{\lambda}}(f,g)\neq \varnothing$ for any
$f,g\in \cG_{(\ell,d)}^{\boldsymbol{\lambda}}$ and
\begin{displaymath}
\cG_{(\ell,d)}=\coprod_{\boldsymbol{\lambda}\in\Lambda(\ell,d)}\cG_{(\ell,d)}^{\boldsymbol{\lambda}}. 
\end{displaymath}
If $f$ has type $\boldsymbol{\lambda}$, then there is an isomorphism of groups $\cG_{(\ell,d)}(f,f)\cong S_{\boldsymbol{\lambda}}$.

\subsection{The linearization of $\cG_{(\ell,d)}$}\label{s2.3}

For a set $X$ we denote by $\mathbb{C}[X]$ the formal complex vector space with the elements of $X$ 
as  basis. If $X=\varnothing$, then we have $\mathbb{C}[X]=0$.

Note that the groupoid $\cG_{(\ell,d)}$ is a finite category. Denote by $\mathbb{C}\cG_{(\ell,d)}$ the 
$\mathbb{C}$-linear category generated by $\cG_{(\ell,d)}$. This means the following:
\begin{itemize}
\item Objects in $\mathbb{C}\cG_{(\ell,d)}$ are the same as in $\cG_{(\ell,d)}$.
\item For two objects $f$ and $g$ we have $\mathbb{C}\cG_{(\ell,d)}(f,g):=
\mathbb{C}[\cG_{(\ell,d)}(f,g)]$.
\item The identity elements in $\mathbb{C}\cG_{(\ell,d)}$ are given by the identity elements 
in $\cG_{(\ell,d)}$.
\item Composition in $\mathbb{C}\cG_{(\ell,d)}$ is induced from composition in $\cG_{(\ell,d)}$ 
by bilinearity.
\end{itemize}
We remark that $\mathbb{C}\cG_{(\ell,d)}$ is no longer a groupoid. 
If $f\in \mathbb{C}\cG_{(\ell,d)}$ is of type $\boldsymbol{\lambda}$, 
then $\mathbb{C}\cG_{(\ell,d)}(f,f)\cong \mathbb{C}[S_{\boldsymbol{\lambda}}]$, 
the group algebra of $S_{\boldsymbol{\lambda}}$.

\subsection{Simple finite dimensional $\mathbb{C}\cG_{(\ell,d)}$-modules}\label{s2.4}

Consider the category $\mathbb{C}\cG_{(\ell,d)}\text{-}\mathrm{mod}$ of $\mathbb{C}$-linear 
functors from $\mathbb{C}\cG_{(\ell,d)}$ to the category of finite dimensional complex 
vector spaces. Objects in $\mathbb{C}\cG_{(\ell,d)}\text{-}\mathrm{mod}$ are called 
{\em $\mathbb{C}\cG_{(\ell,d)}$-modules}. Morphism in  $\mathbb{C}\cG_{(\ell,d)}\text{-}\mathrm{mod}$
are natural transformations of functors.

For $\boldsymbol{\lambda}\in \Lambda(\ell,d)$ consider the set $\mathbf{T}_{\boldsymbol{\lambda}}$
of all $\ell$-multi-partitions $\mathbf{p}$ of shape $\boldsymbol{\lambda}$, that is all 
$\ell$-tuples $\mathbf{p}=(p_1,p_2,\dots,p_\ell)$  of partitions such that $p_i\vdash \lambda_i$
for all $i$. For $f,g$ of type $\boldsymbol{\lambda}$ let $\sigma_{(f,g)}$ be the unique 
{\em order preserving} element in $\cG_{(\ell,d)}(f,g)$ in the sense that it has the 
following property:
\begin{center}
For all $i,j\in \underline{d}$ satisfying $i<j$ and $f(i)=f(j)$, we have $\sigma_{(f,g)}(i)<\sigma_{(f,g)}(j)$.
\end{center}
The element $\sigma_{(f,g)}$ should be thought of as a ``canonical'' isomorphism between
the objects $f$ and $g$. Clearly, for $f,g,h$ of type $\boldsymbol{\lambda}$ we have $\sigma_{(g,h)}\sigma_{(f,g)}=\sigma_{(f,h)}$ and
$\sigma_{(f,f)}=e_f$. We also denote by $f_{\boldsymbol{\lambda}}$ the unique object of type $\boldsymbol{\lambda}$ in which all colors are assigned in the natural order from $1$ to $d$,
that is
\begin{gather*}
f_{\boldsymbol{\lambda}}(1)=f_{\boldsymbol{\lambda}}(2)=\dots=
f_{\boldsymbol{\lambda}}(\lambda_1)=1,\\
f_{\boldsymbol{\lambda}}(\lambda_1+1)=f_{\boldsymbol{\lambda}}(\lambda_1+2)=\dots=
f_{\boldsymbol{\lambda}}(\lambda_1+\lambda_2)=2,\\
\dots.
\end{gather*}
This element is our ``canonical'' object in $\cG_{(\ell,d)}^{\boldsymbol{\lambda}}$.
We fix the evident identification of $S_{\boldsymbol{\lambda}}$ with 
$\cG_{(\ell,d)}(f_{\boldsymbol{\lambda}},f_{\boldsymbol{\lambda}})$.

For $\mathbf{p}\in \mathbf{T}_{\boldsymbol{\lambda}}$, define a $\mathbb{C}\cG_{(\ell,d)}$-module 
$\mathrm{L}_{\mathbf{p}}$ as follows:
\begin{itemize}
\item $\mathrm{L}_{\mathbf{p}}(f):=
\begin{cases}
\mathscr{S}_{\mathbf{p}},&\text{if $f$ is of type $\boldsymbol{\lambda}$}; \\
0,&\text{if $f$ is not of type $\boldsymbol{\lambda}$}.
\end{cases}
$
\item For any $f,g$ of type $\lambda$, any $\pi\in\cG_{(\ell,d)}(f,g)$ and 
any $v\in \mathscr{S}_{\mathbf{p}}$, we set 
\begin{displaymath}
\mathrm{L}_{\mathbf{p}}(\pi)\cdot v:=\sigma_{g,f_{\boldsymbol{\lambda}}}\pi\sigma_{f_{\boldsymbol{\lambda}},f}(v).
\end{displaymath}
and extend this linearly to an action of the whole of $\mathbb{C}\cG_{(\ell,d)}$.
\end{itemize}

\begin{proposition}\label{prop9}
We have the following:
\begin{enumerate}[$($i$)$] 
\item\label{prop9.1}
For $\mathbf{p}\in \mathbf{T}_{\boldsymbol{\lambda}}$, the functor $\mathrm{L}_{\mathbf{p}}$ defined above is a simple $\mathbb{C}\cG_{(\ell,d)}$-module.
\item\label{prop9.2} The set 
\begin{displaymath}
\coprod_{\boldsymbol{\lambda}\in\Lambda(\ell,d)}\left\{\mathrm{L}_{\mathbf{p}}\mid 
\mathbf{p}\in \mathbf{T}_{\boldsymbol{\lambda}}\right\}
\end{displaymath}
is a cross-section of isomorphism classes of simple $\mathbb{C}\cG_{(\ell,d)}$-modules.
\end{enumerate}
\end{proposition}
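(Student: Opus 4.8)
The plan is to exploit that $\cG_{(\ell,d)}$ is a groupoid whose connected components are precisely the subgroupoids $\cG_{(\ell,d)}^{\boldsymbol{\lambda}}$, $\boldsymbol{\lambda}\in\Lambda(\ell,d)$, together with the classical representation theory of symmetric groups. Since there are no nonzero morphisms in $\mathbb{C}\cG_{(\ell,d)}$ between objects lying in different components, any $\mathbb{C}\cG_{(\ell,d)}$-module $M$ decomposes (objectwise) as $M=\bigoplus_{\boldsymbol{\lambda}\in\Lambda(\ell,d)}M_{\boldsymbol{\lambda}}$, where $M_{\boldsymbol{\lambda}}$ agrees with $M$ on objects of type $\boldsymbol{\lambda}$ and is zero on all other objects, and each $M_{\boldsymbol{\lambda}}$ is again a submodule. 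Hence a simple $\mathbb{C}\cG_{(\ell,d)}$-module is supported on exactly one component $\cG_{(\ell,d)}^{\boldsymbol{\lambda}}$, and it suffices to describe the simple modules supported on a fixed component.

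First I would check that $\mathrm{L}_{\mathbf{p}}$ is a well-defined functor; this reduces to the relations $\sigma_{(f_{\boldsymbol{\lambda}},g)}\sigma_{(g,f_{\boldsymbol{\lambda}})}=e_g$ and $\sigma_{(f,f_{\boldsymbol{\lambda}})}\sigma_{(f_{\boldsymbol{\lambda}},f)}=e_f$, which are instances of the identity $\sigma_{(g,h)}\sigma_{(f,g)}=\sigma_{(f,h)}$ recorded in Subsection~\ref{s2.4}. Next, fix $\boldsymbol{\lambda}$ and let $M$ be a module supported on $\cG_{(\ell,d)}^{\boldsymbol{\lambda}}$; then $M(f_{\boldsymbol{\lambda}})$ is a module over $\cG_{(\ell,d)}(f_{\boldsymbol{\lambda}},f_{\boldsymbol{\lambda}})\cong S_{\boldsymbol{\lambda}}$. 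The key point is that every morphism $\pi\colon g\to h$ inside the component can be written as $\pi=\sigma_{(f_{\boldsymbol{\lambda}},h)}\,\rho\,\sigma_{(g,f_{\boldsymbol{\lambda}})}$ with $\rho:=\sigma_{(h,f_{\boldsymbol{\lambda}})}\,\pi\,\sigma_{(f_{\boldsymbol{\lambda}},g)}\in S_{\boldsymbol{\lambda}}$; using this one checks that $N\mapsto\bigl(g\mapsto M(\sigma_{(f_{\boldsymbol{\lambda}},g)})(N)\bigr)$ is an inclusion-preserving bijection between the $S_{\boldsymbol{\lambda}}$-submodules of $M(f_{\boldsymbol{\lambda}})$ and the subfunctors of $M$, with inverse $K\mapsto K(f_{\boldsymbol{\lambda}})$. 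In particular, noting that $M=0$ whenever $M(f_{\boldsymbol{\lambda}})=0$ (every $M(g)$ with $g$ of type $\boldsymbol{\lambda}$ is isomorphic to $M(f_{\boldsymbol{\lambda}})$ via the groupoid), the module $M$ is simple if and only if $M(f_{\boldsymbol{\lambda}})$ is a simple $S_{\boldsymbol{\lambda}}$-module. Since evaluating $\mathrm{L}_{\mathbf{p}}$ at $f_{\boldsymbol{\lambda}}$ returns exactly $\mathscr{S}_{\mathbf{p}}$ with its natural $S_{\boldsymbol{\lambda}}$-action, part~\eqref{prop9.1} follows.

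For part~\eqref{prop9.2}, the same bijection shows that every simple module supported on $\cG_{(\ell,d)}^{\boldsymbol{\lambda}}$ is isomorphic to $\mathrm{L}_{\mathbf{p}}$ for the $\mathbf{p}\in\mathbf{T}_{\boldsymbol{\lambda}}$ determined by its value at $f_{\boldsymbol{\lambda}}$, once we invoke the classical fact that $\{\mathscr{S}_{\mathbf{p}}\mid\mathbf{p}\in\mathbf{T}_{\boldsymbol{\lambda}}\}$ is a complete, irredundant list of simple $\mathbb{C}[S_{\boldsymbol{\lambda}}]$-modules (Specht modules classify the simples of each symmetric group, and, $\mathbb{C}$ being a splitting field, the simples of $S_{\boldsymbol{\lambda}}=S_{\lambda_1}\times\dots\times S_{\lambda_\ell}$ are the outer tensor products of simples of the factors). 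Distinct $\mathbf{p},\mathbf{q}\in\mathbf{T}_{\boldsymbol{\lambda}}$ give non-isomorphic $\mathrm{L}_{\mathbf{p}},\mathrm{L}_{\mathbf{q}}$ since their values at $f_{\boldsymbol{\lambda}}$ differ, while modules attached to different $\boldsymbol{\lambda}$ have different supports; hence the displayed union is a cross-section of simple $\mathbb{C}\cG_{(\ell,d)}$-modules. The only genuine work is the bookkeeping setting up the bijection between subfunctors and $S_{\boldsymbol{\lambda}}$-submodules through the canonical isomorphisms $\sigma_{(f,g)}$, which I expect to be the main (though essentially routine) obstacle; everything else is either formal for groupoids or a direct appeal to the representation theory of symmetric groups.
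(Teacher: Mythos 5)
Your proof is correct and follows essentially the same route as the paper: restrict to a connected component, transport everything to the base object $f_{\boldsymbol{\lambda}}$ via the canonical isomorphisms $\sigma_{(f,g)}$, and invoke the classification of simple $\mathbb{C}[S_{\boldsymbol{\lambda}}]$-modules by $\ell$-multi-partitions. The only difference is that you spell out the subfunctor--submodule correspondence that the paper compresses into ``follows by construction from the facts that $\cG_{(\ell,d)}$ is a groupoid and $\mathscr{S}_{\mathbf{p}}$ is simple over $\cG_{(\ell,d)}(f_{\boldsymbol{\lambda}},f_{\boldsymbol{\lambda}})$''.
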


\begin{proof}
The functoriality of $\mathrm{L}_{\mathbf{p}}$ follows directly from the definitions and 
the observation that $\sigma_{f_{\boldsymbol{\lambda}},g}\sigma_{g,f_{\boldsymbol{\lambda}}}=e_g$. 
The simplicity of $\mathrm{L}_{\mathbf{p}}$ follows by construction from the facts that 
$\cG_{(\ell,d)}$ is a groupoid and that $\mathscr{S}_{\boldsymbol{\lambda}}$ is a simple 
$\cG_{(\ell,d)}(f_{\boldsymbol{\lambda}},f_{\boldsymbol{\lambda}})$-module.
This proves claim~\eqref{prop9.1}.

Claim~\eqref{prop9.2} follows from the facts that connected components of the groupoid $\cG_{(\ell,d)}$ are indexed by $\boldsymbol{\lambda}\in\Lambda(\ell,d)$ and that the set
$\{\mathscr{S}_{\mathbf{p}}\mid \mathbf{p}\in \mathbf{T}_{\boldsymbol{\lambda}}\}$
is a complete and irredundant set of representatives of
isomorphism classes of simple $\mathbb{C}\cG_{(\ell,d)}(f_{\boldsymbol{\lambda}},f_{\boldsymbol{\lambda}})$-modules
(see e.g. \cite[Chapter~2]{Sa}). This completes the proof.
\end{proof}

The vector space
\begin{displaymath}
A_{(\ell,d)}:=\bigoplus_{f,g\in\cG_{(\ell,d)}} \mathbb{C}\cG_{(\ell,d)}(f,g)
\end{displaymath}
inherits from $\mathbb{C}\cG_{(\ell,d)}$ the structure of a finite dimensional associative 
algebra over $\mathbb{C}$. As usual, there is a canonical equivalence of categories 
\begin{equation}\label{eq1}
\mathbb{C}\cG_{(\ell,d)}\text{-}\mathrm{mod}\cong A_{(\ell,d)}\text{-}\mathrm{mod},
\end{equation}
where the right hand side denotes the category of finite dimensional $A_{(\ell,d)}$-modules.
Taking this equivalence into account, Proposition~\ref{prop9} provides an explicit description 
of  all simple $A_{(\ell,d)}$-modules. In what follows we identify $A_{(\ell,d)}$-modules and 
$\mathbb{C}\cG_{(\ell,d)}$-modules via this equivalence. We note that
\begin{equation}\label{eq5}
\sum_{f\in \cG_{(\ell,d)}}\dim(\mathrm{L}_{\mathbf{p}}(f))=\frac{n!}{\boldsymbol{\lambda}!}\dim \mathscr{S}_{\mathbf{p}}.
\end{equation}
For $\mu\vdash n$, the Specht $S_n$-module $\mathscr{S}_{\mu}$ has a basis given by 
polytabloids corresponding to standard Young tableaux of shape $\mu$, see \cite[Section~2.5]{Sa}.
This extends in the evident way to a basis in $\mathscr{S}_{\mathbf{p}}$ and hence gives a 
basis in each $\mathrm{L}_{\mathbf{p}}(f)$, where $f$ is of type $\boldsymbol{\lambda}$. A 
more ``natural'' parameterization of the elements of this basis in the space
$\mathrm{L}_{\mathbf{p}}(g)$ is obtained by applying $\sigma_{f_{\boldsymbol{\lambda}},g}$
to the entries of the corresponding standard tableaux.

\subsection{Connection to $S(\ell,d)$}\label{s2.6}

For $f,g\in \cG_{(\ell,d)}$ and $\sigma\in \cG_{(\ell,d)}(f,g)$ we will write $\sigma=\sigma_{(f,g)}$ 
to distinguish it from the same $\sigma$ appearing as a morphism between another pair of 
objects. If $\sigma\not\in \cG_{(\ell,d)}(f,g)$, we write $\sigma_{(f,g)}=0$ (viewing it
as an element in $\mathbb{C}\cG_{(\ell,d)}(f,g)$).

There is a unique linear map $\Phi:\mathbb{C}[S(\ell,d)]\to A_{(\ell,d)}$ such that
\begin{itemize}
\item $\Phi(\sigma)=\sum_{f,g}\sigma_{(f,g)}$ for $\sigma\in S_d$;
\item $\Phi(s_0)=\sum_{f}\xi_\ell^{f(1)}e_f$.
\end{itemize} 

\begin{theorem}\label{thm7}
The map $\Phi:\mathbb{C}[S(\ell,d)]\to A_{(\ell,d)}$ is an isomorphism of algebras. 
\end{theorem}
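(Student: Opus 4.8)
The plan is to show that $\Phi$ is a well-defined algebra homomorphism, then verify it is bijective by a dimension count combined with surjectivity. First I would check that $\Phi$ respects the defining relations \eqref{eq3} of $S(\ell,d)$, which is the natural way to produce the homomorphism without choosing an explicit description of $\mathbb{C}[S(\ell,d)]$. The relations among $s_1,\dots,s_{d-1}$ are immediate, since $\sigma \mapsto \sum_{f,g}\sigma_{(f,g)}$ is visibly multiplicative: composing $\sigma_{(f,g)}$ with $\tau_{(g,h)}$ in $A_{(\ell,d)}$ gives $(\tau\sigma)_{(f,h)}$, and summing over all compatible objects realises the group multiplication of $S_d$. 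The relation $s_0^\ell = e$ follows because $\Phi(s_0)^\ell = \sum_f \xi_\ell^{\ell f(1)} e_f = \sum_f e_f = 1_{A_{(\ell,d)}}$. The only genuinely interactive relation is the mixed braid relation $s_0 s_1 s_0 s_1 = s_1 s_0 s_1 s_0$: here one computes both sides in $A_{(\ell,d)}$ and checks that, object by object, the scalar appearing is $\xi_\ell^{f(1)+f(2)}$ (respectively $\xi_\ell^{f(2)+f(1)}$ after transposing), which agree. The far-commutativity relations $s_0 s_j = s_j s_0$ for $j>1$ are similar but easier, since $s_j$ does not move the first strand, so the colour $f(1)$ is unaffected.

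Once $\Phi$ is known to be an algebra homomorphism, I would compare dimensions: by Proposition~\ref{prop3}\eqref{prop3.3} we have $\dim A_{(\ell,d)} = \ell^d \cdot d! = |S(\ell,d)| = \dim \mathbb{C}[S(\ell,d)]$. So it suffices to prove that $\Phi$ is surjective. Every morphism space $\cG_{(\ell,d)}(f,g)$ with $f,g$ of type $\boldsymbol{\lambda}$ is a torsor over $\cG_{(\ell,d)}(f,f)\cong S_{\boldsymbol{\lambda}}$, and the images $\Phi(s_0),\dots,\Phi(s_{d-1})$ generate enough elements to reach every $\sigma_{(f,g)}$: applying suitable products of $\Phi(s_0)$'s and idempotents built from them one can isolate any single $e_f$ (standard Lagrange-interpolation argument on the commuting family $\{\Phi(s_0^{(j)})\}$ one gets by conjugating $s_0$ by permutations — though here it is cleaner to observe directly that $\Phi(\mathbb{C}[S(\ell,d)])$ is a subalgebra containing all $\sum_{f,g}\sigma_{(f,g)}$ for $\sigma\in S_d$ and all $e_f$, hence all $\sigma_{(f,g)}$). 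From $e_f$ and $\sum_{f',g'}\sigma_{(f',g')}$ one recovers $e_g \cdot \bigl(\sum \sigma_{(f',g')}\bigr)\cdot e_f = \sigma_{(f,g)}$ whenever this is nonzero, and these span $A_{(\ell,d)}$ by construction.

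For the idempotent extraction, the key point is that $\Phi(s_0)$, together with the elements $w_j := \Phi(\sigma) \Phi(s_0) \Phi(\sigma)^{-1}$ for $\sigma$ a permutation sending $1$ to $j$, form a commuting family of diagonalisable operators whose simultaneous eigenspaces (indexed by the colour functions $f$) are precisely the lines $\mathbb{C} e_f$; explicitly $w_j = \sum_f \xi_\ell^{f(j)} e_f$, so a product of appropriate polynomials in the $w_j$ picks out $e_f$. This shows all $e_f \in \operatorname{im}\Phi$, completing surjectivity and hence the proof.

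I expect the main obstacle to be the careful bookkeeping in the mixed braid relation $s_0 s_1 s_0 s_1 = s_1 s_0 s_1 s_0$: one must track how the scalar $\xi_\ell^{f(1)}$ transforms under the intervening transposition $s_1$, since $s_1$ changes which strand occupies position $1$, and confirm that the accumulated exponent is symmetric in $f(1)$ and $f(2)$ on both sides. Everything else is either formal (functoriality/composition in the linearised groupoid) or a routine dimension count.
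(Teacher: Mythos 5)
Your proposal is correct, and its skeleton matches the paper's: check the relations \eqref{eq3} on the generators (the only genuinely interactive one being the mixed braid relation, where both sides indeed reduce to $\sum_f\xi_\ell^{f(1)+f(2)}e_f$), compare dimensions via Proposition~\ref{prop3}\eqref{prop3.3}, and reduce surjectivity to showing each identity morphism $e_f$ lies in the image, after which $e_g\,\Phi(\sigma)\,e_f=\sigma_{(f,g)}$ recovers every basis element of $A_{(\ell,d)}$. Where you genuinely diverge is in how the $e_f$ are extracted. The paper takes polynomials only in $\Phi(s_0)$, obtaining the coarser idempotents $x_s=\sum_{f:f(1)=s}e_f$, and then isolates a single $e_f$ by a downward induction on $|\{i\mid f(i)=s\}|$, using products of conjugates $\Phi(\sigma^{-1})x_s\Phi(\sigma)$ over suitable (sub)symmetric groups. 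You instead use the full commuting family $w_j=\Phi(s_0^{(j)})=\sum_f\xi_\ell^{f(j)}e_f$, $j=1,\dots,d$ (exactly the elements the paper records later in \eqref{eqn1}), note that the joint eigenvalue tuple $(\xi_\ell^{f(1)},\dots,\xi_\ell^{f(d)})$ separates the objects $f$, and take the product of Lagrange interpolation polynomials $\prod_j P_{j,f(j)}(w_j)=e_f$, which lies in $\operatorname{im}\Phi$ because the image is a subalgebra containing each $w_j$. This is a clean simplification: it eliminates the induction entirely, at the negligible cost of the conjugation bookkeeping $\Phi(\sigma)\Phi(s_0)\Phi(\sigma)^{-1}=\sum_f\xi_\ell^{f(\sigma(1))}e_f$, which does come out right under the paper's composition convention with $\sigma(1)=j$. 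Both routes deliver the same conclusion; yours also makes explicit two points the paper leaves as observations, namely the multiplicativity of $\sigma\mapsto\sum_{f,g}\sigma_{(f,g)}$ on $S_d$ and the final recovery step $e_g\Phi(\sigma)e_f=\sigma_{(f,g)}$.
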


\begin{proof}
Note that we defined the map on algebra generators. To prove that $\Phi$ is a well-defined homomorphism, it is enough to check that 
$\Phi(s_i)$, where $i=0,1,2,\dots,d-1$, satisfy the defining relations for $S(\ell,d)$ 
given in \eqref{eq3}. All defining relations which do not involve $s_0$ are clear from 
the definition. That $\Phi(s_0)^\ell$ is the identity follows from the fact that $\xi_\ell^\ell=1$.
That $\Phi(s_0)\Phi(s_i)=\Phi(s_i)\Phi(s_0)$ for $i\neq 1$ is clear because the definition 
of $\Phi(s_0)$ only involves $f(1)$. 

It remains to verify that 
$\Phi(s_0)\Phi(s_1)\Phi(s_0)\Phi(s_1)=\Phi(s_1)\Phi(s_0)\Phi(s_1)\Phi(s_0)$.
It is straightforward to show that both sides of this equality are equal to the element
$\sum_{f}\xi_\ell^{f(1)+f(2)}e_f$. This implies that $\Phi$ is a homomorphism.

Note that $\dim(\mathbb{C}S(\ell,d))=\dim(A_{(\ell,d)})$. Therefore to complete the proof it is 
enough to check, say, surjectivity of $\Phi$. Since each $\sigma_{(f,g)}$ appears in 
$\Phi(\sigma)$ with a non-zero coefficient, it is enough to check that the identity morphism 
$e_f$ is in the image of $\Phi$ for each $f$.

Denote by $B$ the subalgebra of $A_{(\ell,d)}$ generated by all $e_f$, where $f\in \cG_{(\ell,d)}$.
The algebra $B$ is a commutative split semi-simple $\mathbb{C}$-algebra of dimension $l^d$.
We set $B':=\Phi(\mathbb{C}S(\ell,d))\cap B$. Then $B'$ is a unital subalgebra of $B$ and we 
need to show that $B'=B$. The left multiplication with the element $\Phi(s_0)\in B'$ on $B$ 
has different eigenvalues $\xi_\ell^1,\xi_\ell^2,\dots,\xi_\ell^{\ell}$. Therefore, by taking polynomials 
in $\Phi(s_0)$, we get that $B'$ contains, for each $s=1,2,\dots,\ell$, the element
\begin{displaymath}
x_s:=\sum_{f:f(1)=s}e_f.
\end{displaymath}

{\it Claim:}  Let now $s\in\underline{\ell}$ be fixed. Then $e_f\in B'$ for each $f$ with $f(1)=s$.

We prove the claim by downward induction on $m=|\{i\in\underline{d}\mid f(i)=s\}|$.
Assume first that $m=d$, that is $f=(s,s,\dots,s)$. Note that 
\begin{displaymath}
\Phi(\sigma^{-1}s_0\sigma)=
\Phi(\sigma^{-1})\Phi(s_0)\Phi(\sigma)\in B'\quad\text{ for any }\quad \sigma\in S_d.
\end{displaymath} 
As $x_s$ is a polynomial in $\Phi(s_0)$, we get $\Phi(\sigma^{-1})x_s\Phi(\sigma)\in B'$
for each $\sigma\in S_d$. Since $B'$ is a subalgebra of $B$, we have 
\begin{displaymath}
\prod_{\sigma\in S_d}\Phi(\sigma^{-1}) x_s\Phi(\sigma)=\xi_\ell^{a} e_{(s,s,\dots,s)}\in B'
\end{displaymath}
for some $a\in\mathbb{Z}_{>0}$. This implies $e_{(s,s,\dots,s)}\in B'$ and the
basis of the induction is established. 

Now we prove the induction step. Consider the set $X=\{i\in\underline{d}\mid f(i)=s\}$.
Then, similarly to the above, we have
\begin{displaymath}
\prod_{\sigma\in S(X)}\Phi(\sigma^{-1}) x_s\Phi(\sigma)=\sum_{g}\xi_\ell^{a_g}e_g\in B'
\end{displaymath}
for some $a_g\in\mathbb{Z}_{>0}$, where the sum on the right hand side is taken over all 
$g$ such that $g(i)=s$ for each $i\in X$.  If $g\neq f$, then 
\begin{displaymath}
|\{i\in\underline{d}\mid g(i)=s\}|>|\{i\in\underline{d}\mid f(i)=s\}|
\end{displaymath}
and hence, by induction, $e_g\in B'$. Therefore $e_f\in B'$ and the proof is complete.
\end{proof}

The isomorphism $\Phi$ from Theorem \ref{thm7} induces an equivalence of categories
\begin{equation}
\label{equiv}
\overline{\Phi}:A_{(\ell,d)}\text{-}\mathrm{mod}\to \mathbb{C}[S(\ell,d)]\text{-}\mathrm{mod}.
\end{equation}
 Combined  with Subsection~\ref{s2.4}, $\overline{\Phi}$ provides a very natural and neat description 
of simple $\mathbb{C}[S(\ell,d)]$-modules. For alternative descriptions of simple $\mathbb{C}[S(\ell,d)]$-modules
we refer the reader to \cite{Ca}, \cite{Ke}, \cite{Os}, \cite{SUI}, \cite{Sp} and references therein.

\subsection{Simple $\mathbb{C}[S(\ell,d)]$-modules via generalized Young subgroups}\label{s2.7}

Here we connect our approach with the one in \cite{SUI}. For $j=1,2,\dots,n$, denote by 
$s_0^{(j)}$ the element $s_{j-1}s_{j-2}\dots s_1s_0s_1s_2\dots s_{j-1}\in S(\ell,d)$.
This element is the diagonal $d\times d$ matrix in which the $(j,j)$-entry equals $\xi_\ell$
and all other diagonal entries are $1$. We have $s_0=s_0^{(1)}$. It is easy to check that 
\begin{equation}\label{eqn1}
\Phi(s_0^{(j)})=\sum_{f\in\cG_{(\ell,d)}}\xi_\ell^{f(j)}e_f.
\end{equation}

Let $\boldsymbol{\lambda}\in\Lambda(\ell,d)$ and 
$f\in \cG_{(\ell,d)}^{\boldsymbol{\lambda}}$. Then we have the decomposition
\begin{displaymath}
\underline{n}=X_1^f\cup X_2^f\cup\dots\cup X_\ell^f, 
\end{displaymath}
where $X^f_i:=\{j\in\underline{n}\mid  f(j)=i\}$ for $i=1,2,\dots,\ell$. Denote by $G_i^{f}$ 
the subgroup of $S(\ell,d)$ generated by all $s_0^{(j)}$, where $j\in X^f_i$, and also by 
all permutations of $X^f_i$ which fix all points outside $X^f_i$. The subgroup $G_i^{f}$ 
is isomorphic to $S(l,|X^f_i|)$ and the direct product 
\begin{displaymath}
G^{f}:=G_1^{f}\times G_2^{f}\times \dots \times G_\ell^{f} 
\end{displaymath}
is, naturally, a subgroup of $S(\ell,d)$. The subgroup $G^{f}$ is called a 
{\em generalized Young subgroup} of $S(l,n)$.

\begin{lemma}\label{lem31}
Let $\boldsymbol{\lambda}\in\Lambda(\ell,d)$, $\mathbf{p}\in\mathbf{T}_{\boldsymbol{\lambda}}$ 
and  $f\in \cG_{(\ell,d)}^{\boldsymbol{\lambda}}$.

\begin{enumerate}[$($i$)$]
\item\label{lem31.1} The space $\mathrm{L}_{\mathbf{p}}(f)$ inherits the structure of a 
simple $G^{f}$-module by restriction.
\item\label{lem31.2} The $S(l,n)$-modules $\overline{\Phi}(\mathrm{L}_{\mathbf{p}})$ and
$\mathrm{Ind}_{G^{f}}^{S(\ell,d)}\,\mathrm{L}_{\mathbf{p}}(f)$ are isomorphic.
\end{enumerate} 
\end{lemma}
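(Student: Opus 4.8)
The plan is to use the isomorphism $\Phi$ from Theorem~\ref{thm7} to transport the question about $S(\ell,d)$-modules back to a question about $A_{(\ell,d)}$-modules, where everything is visibly controlled by the groupoid structure. First I would make part~\eqref{lem31.1} explicit: under $\Phi$, the generalized Young subgroup $G^f$ is spanned by those colored permutations whose underlying permutation preserves each block $X_i^f$ together with the diagonal elements $s_0^{(j)}$, $j\in X_i^f$; using \eqref{eqn1} and the definition of $\Phi$ on $S_d$ one checks that $\Phi(G^f)$, acting on the single summand $\mathrm{L}_{\mathbf{p}}(f)=\mathscr{S}_{\mathbf{p}}$, acts exactly through the conjugated symmetric-group action $\sigma_{(f,f_{\boldsymbol{\lambda}})}(-)\sigma_{(f_{\boldsymbol{\lambda}},f)}$ on each factor $\mathscr{S}_{p_i}$ (the $s_0^{(j)}$ acting by the scalar $\xi_\ell^{f(j)}e_f$). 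Since $\mathscr{S}_{\mathbf p}=\mathscr{S}_{p_1}\otimes\cdots\otimes\mathscr{S}_{p_\ell}$ is a simple $S_{\boldsymbol\lambda}$-module and the cyclic generators act by scalars on it, this identifies $\mathrm{L}_{\mathbf p}(f)$ as a simple $G^f$-module, namely the one that is "Specht on the permutation part, scalar on the cyclic part".

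For part~\eqref{lem31.2} the key observation is a dimension/orbit count. By Proposition~\ref{prop3}\eqref{prop3.2} the connected component $\cG_{(\ell,d)}^{\boldsymbol\lambda}$ has $\binom{d}{\boldsymbol\lambda}=d!/\boldsymbol\lambda!$ objects, all of type $\boldsymbol\lambda$, and by \eqref{eq5} we have $\dim\overline\Phi(\mathrm{L}_{\mathbf p})=\sum_{f}\dim\mathrm{L}_{\mathbf p}(f)=\frac{d!}{\boldsymbol\lambda!}\dim\mathscr{S}_{\mathbf p}$. On the other hand $[S(\ell,d):G^f]=\frac{\ell^d d!}{\ell^d\boldsymbol\lambda!}=\frac{d!}{\boldsymbol\lambda!}$, so $\dim\mathrm{Ind}_{G^f}^{S(\ell,d)}\mathrm{L}_{\mathbf p}(f)=\frac{d!}{\boldsymbol\lambda!}\dim\mathscr{S}_{\mathbf p}$ as well; the two modules have equal dimension. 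It therefore suffices to produce a nonzero $S(\ell,d)$-homomorphism from one to the other and invoke simplicity of $\overline\Phi(\mathrm{L}_{\mathbf p})$ (Proposition~\ref{prop9}, transported via $\overline\Phi$). By Frobenius reciprocity a nonzero map $\mathrm{Ind}_{G^f}^{S(\ell,d)}\mathrm{L}_{\mathbf p}(f)\to\overline\Phi(\mathrm{L}_{\mathbf p})$ is the same as a nonzero $G^f$-map $\mathrm{L}_{\mathbf p}(f)\to\mathrm{Res}^{S(\ell,d)}_{G^f}\overline\Phi(\mathrm{L}_{\mathbf p})$, and the inclusion of the summand $\mathrm{L}_{\mathbf p}(f)\hookrightarrow\bigoplus_g\mathrm{L}_{\mathbf p}(g)$ is exactly such a map by part~\eqref{lem31.1}. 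Hence the map is injective (source is simple) and then bijective by the dimension count, proving \eqref{lem31.2}.

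Alternatively, and perhaps more in the spirit of the groupoid picture, one can prove \eqref{lem31.2} directly by exhibiting an explicit isomorphism: choose coset representatives $\{\sigma_{(f,g)} : g\in\cG_{(\ell,d)}^{\boldsymbol\lambda}\}$ for $G^f$ in $S(\ell,d)$ (here one must check that the order-preserving morphisms do form a transversal, which follows since $\sigma_{(f,g)}G^f=\sigma_{(f,g')}G^f$ forces $g=g'$ by comparing colorings), and send the elementary tensor $\sigma_{(f,g)}\otimes v$ in the induced module to $\sigma_{(f_{\boldsymbol\lambda},g)}(v)\in\mathrm{L}_{\mathbf p}(g)$. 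One then checks $S(\ell,d)$-equivariance using the cocycle identity $\sigma_{(g,h)}\sigma_{(f,g)}=\sigma_{(f,h)}$ together with the rule $\Phi(\sigma)=\sum_{f,g}\sigma_{(f,g)}$ and $\Phi(s_0^{(j)})=\sum_f\xi_\ell^{f(j)}e_f$; bijectivity is immediate since the map is given blockwise by the invertible operators $\sigma_{(f_{\boldsymbol\lambda},g)}$.

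\textbf{Main obstacle.} The routine parts are the dimension count and Frobenius reciprocity; the one genuinely fiddly point is part~\eqref{lem31.1} — verifying that under $\Phi$ the subgroup $G^f$ really acts on the single summand $\mathrm{L}_{\mathbf p}(f)$ and does so through the expected conjugated Specht action with the $s_0^{(j)}$ contributing only scalars. This requires being careful that permutations in $G_i^f$ keep the coloring $f$ fixed (so they are endomorphisms of $f$ in the groupoid, not just arbitrary morphisms) and tracking the conjugations $\sigma_{(g,f_{\boldsymbol\lambda})}(-)\sigma_{(f_{\boldsymbol\lambda},f)}$ in the definition of $\mathrm{L}_{\mathbf p}$; once this bookkeeping is done, the simplicity of $\mathrm{L}_{\mathbf p}(f)$ as a $G^f$-module is inherited from simplicity of $\mathscr{S}_{\mathbf p}$ over $S_{\boldsymbol\lambda}$ exactly as in the proof of Proposition~\ref{prop9}.
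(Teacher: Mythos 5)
Your argument is correct and follows essentially the same route as the paper: part~(i) by observing that each $s_0^{(j)}$ acts on $\mathrm{L}_{\mathbf p}(f)$ by the scalar $\xi_\ell^{f(j)}$ while color-preserving permutations act through the (conjugated) Specht action, and part~(ii) by adjunction/Frobenius reciprocity giving a nonzero map between the induced module and $\overline{\Phi}(\mathrm{L}_{\mathbf p})$, combined with the dimension count $[S(\ell,d):G^f]=d!/\boldsymbol{\lambda}!$ and \eqref{eq5}. The only slip is the parenthetical ``injective (source is simple)'': the induced module is not yet known to be simple, so the correct phrasing is that the map is surjective because the target $\overline{\Phi}(\mathrm{L}_{\mathbf p})$ is simple and then bijective by the dimension count, which is exactly the paper's argument.
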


\begin{proof}
That $\mathrm{L}_{\mathbf{p}}(f)$ is stable under the action of all $s_0^{(j)}$ is clear 
from the definitions. Similarly, it is also clear that $\mathrm{L}_{\mathbf{p}}(f)$ is 
stable under the action of all permutations which preserve colors. Claim~\eqref{lem31.1} follows.

From claim~\eqref{lem31.1}, it follows by adjunction that 
$\mathrm{Ind}_{G^{f}}^{S(\ell,d)}\,\mathrm{L}_{\mathbf{p}}(f)$ surjects onto 
$\overline{\Phi}(\mathrm{L}_{\mathbf{p}})$. However, since the index of $G^{f}$ in $S(\ell,d)$ 
equals  $\frac{n!}{\boldsymbol{\lambda}!}$, from \eqref{eq5} it follows that the modules 
$\overline{\Phi}(\mathrm{L}_{\mathbf{p}})$ and $\mathrm{Ind}_{G^{f}}^{S(\ell,d)}\,\mathrm{L}_{\mathbf{p}}(f)$
have the same dimension and thus are isomorphic. 
\end{proof}

Using this basis in each $\mathrm{L}_{\mathbf{p}}(f)$, 
where $f$ is of type $\boldsymbol{\lambda}$, described in Subsection~\ref{s2.4} and the classical
branching rule for the symmetric group, see \cite[Section~2.8]{Sa}, one immediately recovers 
the branching rule for the restriction from $S(\ell,d)$ to $S(\ell,d-1)$ 
as described in \cite{Mar}. Namely, the restriction of $\mathrm{L}_{\mathbf{p}}$
to $S(\ell,d-1)$ is a multiplicity free direct sum of $\mathrm{L}_{\mathbf{q}}$ where
$\mathbf{q}$ is obtained from $\mathbf{p}$ by removing one removable node from one of the
parts of $\mathbf{p}$.

\subsection{Gelfand model}\label{s2.8}

Recall that a {\em Gelfand model} for a finite group $G$ is a $G$-module isomorphic 
to a multiplicity-free direct sum of all simple $G$-modules. Similarly one defines Gelfand 
models for semi-simple algebras. Let $\mathcal{I}$ be the set of all involutions in $S_d$, 
that is all elements $w\in S_d$ satisfying $w^2=e$. Define an $S_d$-module structure on 
$\mathbb{C}[\mathcal{I}]$, for $\sigma\in S_d$ and $w\in \mathcal{I}$, as follows:
\begin{gather*}
\sigma\cdot w=(-1)^{\mathrm{inv}(\sigma,w)}(\sigma w \sigma^{-1}),\quad\text{ where }\\
\mathrm{inv}(\sigma,w):=|\{(i,j)\mid i,j\in\underline{d},\, i<j,\,w(i)=j,\,\sigma(i)>\sigma(j)\}|.
\end{gather*}

\begin{proposition}{\rm (\cite{IRS}, \cite{APR1})}
The $S_d$-module $\mathbb{C}[\mathcal{I}]$ is a Gelfand model for $S_d$.
\end{proposition}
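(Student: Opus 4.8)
The plan is to prove that $\mathbb{C}[\mathcal{I}]$ decomposes as a multiplicity-free direct sum of all Specht modules $\mathscr{S}_\mu$, $\mu\vdash d$. Since the number of involutions in $S_d$ equals $\sum_{\mu\vdash d}\dim\mathscr{S}_\mu$ (the classical Frobenius--Schur count, as the symmetric group has only real representations), the dimensions already match; thus it suffices to show that every simple module occurs \emph{at least once} in $\mathbb{C}[\mathcal{I}]$, or equivalently that the character $\chi$ of $\mathbb{C}[\mathcal{I}]$ satisfies $\langle\chi,\chi_\mu\rangle\geq 1$ for each $\mu$. First I would verify that the formula $\sigma\cdot w=(-1)^{\mathrm{inv}(\sigma,w)}(\sigma w\sigma^{-1})$ really defines a module structure: one must check the cocycle condition $(-1)^{\mathrm{inv}(\sigma\tau,w)}=(-1)^{\mathrm{inv}(\sigma,\tau w\tau^{-1})}(-1)^{\mathrm{inv}(\tau,w)}$ for all $\sigma,\tau\in S_d$ and $w\in\mathcal{I}$, which is a direct but slightly delicate combinatorial identity about counting inversions of the arcs of the fixed-point-free part of $w$.

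Next I would compute the character. Writing $\mathcal{I}_k\subseteq\mathcal{I}$ for the set of involutions with exactly $k$ two-cycles (so $w$ has $d-2k$ fixed points), the submodule structure respects the stratification only up to conjugation, but one can still evaluate $\chi(\sigma)=\sum_{w:\,\sigma w\sigma^{-1}=w}(-1)^{\mathrm{inv}(\sigma,w)}$. The sign $(-1)^{\mathrm{inv}(\sigma,w)}$ is designed precisely so that the character value matches that of the classical Gelfand model built from induced modules; indeed the standard reference proof (Inglis--Richardson--Saxl, or Adin--Postnikov--Roichman) identifies $\mathbb{C}[\mathcal{I}]$ with $\bigoplus_{k}\mathrm{Ind}_{S_{2k}\times S_{d-2k}}^{S_d}\bigl((\text{sign-twisted perfect-matching module on }S_{2k})\boxtimes\mathbf{1}\bigr)$, and then one uses the known fact that the matching module on $S_{2k}$ (with the sign twist) is the Gelfand model for the hyperoctahedral-type situation, or more directly one invokes the classical result that $\mathrm{Ind}_{S_2\wr S_k}^{S_{2k}}\mathbf{1}=\bigoplus_{\nu}\mathscr{S}_{2\nu}$ ranges over even partitions. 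Combining these and the Littlewood--Richardson / Pieri-type identity $\sum_k\sum_{\nu\vdash k}s_{2\nu}\cdot h_{d-2k}=\sum_{\mu\vdash d}s_\mu$ (a well-known symmetric-function identity, equivalent to $\sum_\mu s_\mu=\prod_i\frac{1}{1-x_i}\prod_{i<j}\frac{1}{1-x_ix_j}$) gives multiplicity-freeness with every $\mathscr{S}_\mu$ appearing exactly once.

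I expect the main obstacle to be the verification that the \emph{sign} $(-1)^{\mathrm{inv}(\sigma,w)}$ in the action is exactly the one needed to make the submodule on $\mathbb{C}[\mathcal{I}_k]$ isomorphic to the induced module $\mathrm{Ind}_{S_{2k}\times S_{d-2k}}^{S_d}(M_k\boxtimes\mathbf{1})$ with $M_k$ the correct sign-twisted matching module, rather than some other twist. Concretely, one fixes a base involution $w_k\in\mathcal{I}_k$, identifies $\mathcal{I}_k$ with the coset space $S_d/\mathrm{Stab}(w_k)$ where $\mathrm{Stab}(w_k)\cong S_2\wr S_k\times S_{d-2k}$, and must check that $w\mapsto(-1)^{\mathrm{inv}(\sigma,w)}\sigma w\sigma^{-1}$ corresponds under this identification to the induced action twisted by the one-dimensional character of $\mathrm{Stab}(w_k)$ that is trivial on $S_{d-2k}$ and on the base group $(S_2)^k$ but is the sign character on the top $S_k$ permuting the arcs. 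This is a bookkeeping argument tracking how inversions of arc-endpoints transform under left multiplication; once it is in place, everything reduces to the symmetric-group facts quoted above, and since this is a known result I would simply cite \cite{IRS} and \cite{APR1} for the details rather than reproduce them, noting only the dimension count and the decomposition statement as above.
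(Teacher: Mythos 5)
The paper offers no proof of this proposition -- it is quoted from \cite{IRS} and \cite{APR1} -- so your sketch is not competing with an argument in the text; it follows the standard IRS/APR route (stratify $\mathcal{I}$ by the number of $2$-cycles, identify each stratum with a module induced from a linear character of the centralizer $(S_2\wr S_k)\times S_{d-2k}$, then decompose via symmetric functions). The strategy is right, and the reduction ``dimensions match by the Frobenius--Schur involution count, so it suffices that every $\mathscr{S}_\mu$ occurs'' is legitimate. But the two places you yourself flag as the crux are both wrong as stated. First, the twist: you claim the relevant character of $\mathrm{Stab}(w_k)$ is trivial on the base $(S_2)^k$ and sign on the top $S_k$ permuting the arcs. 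Already for $d=2$, $w=(12)$, $\sigma=(12)$ one has $\mathrm{inv}(\sigma,w)=1$, so $\sigma\cdot w=-w$ and $\mathbb{C}[\mathcal{I}_1]$ is the sign representation; your character is trivial on this stabilizer and would give $\mathbb{C}[\mathcal{I}]\cong\mathbf{1}\oplus\mathbf{1}$, which is not a Gelfand model. The correct character is the opposite one: sign on each base $S_2$ and trivial on the arc-permuting $S_k$ (equivalently, the restriction of $\mathrm{sgn}_{S_{2k}}$, trivial on $S_{d-2k}$); e.g.\ for $w=(12)(34)$ the arc swap $(13)(24)$ has $\mathrm{inv}=0$ while the base transposition $(34)$ has $\mathrm{inv}=1$.

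Second, the symmetric-function identity you invoke, $\sum_k\sum_{\nu\vdash k}s_{2\nu}h_{d-2k}=\sum_{\mu\vdash d}s_\mu$, is false (in degree $2$ the left side is $2s_2$, the right side $s_2+s_{11}$); it is not equivalent to $\sum_\mu s_\mu=\prod_i(1-x_i)^{-1}\prod_{i<j}(1-x_ix_j)^{-1}$, because $\sum_\nu s_{2\nu}=\prod_{i\le j}(1-x_ix_j)^{-1}$ whereas $\prod_{i<j}(1-x_ix_j)^{-1}=\sum_\nu s_{(2\nu)'}$. With the corrected twist one gets $M_k=\mathrm{Ind}_{S_2\wr S_k}^{S_{2k}}(\mathrm{sgn}|_{S_2\wr S_k})=\bigoplus_{\nu\vdash k}\mathscr{S}_{(2\nu)'}$ (even columns, the sign-twist of your quoted $\mathrm{Ind}\,\mathbf{1}=\bigoplus_\nu\mathscr{S}_{2\nu}$), and the correct Pieri/Littlewood identity $\sum_k\sum_{\nu\vdash k}s_{(2\nu)'}h_{d-2k}=\sum_{\mu\vdash d}s_\mu$ then yields multiplicity-freeness with every $\mathscr{S}_\mu$ exactly once. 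So the architecture of your proof is the standard one and is repairable, but as written the key identification and the key identity both fail; citing \cite{IRS}, \cite{APR1} does not cover this, since the content you would be citing is precisely the part you stated incorrectly.
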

This model (sometimes referred to as the {\em involutive Gelfand model}) was 
generalized to wreath products in \cite{APR2,CF}, to inverse semigroups in \cite{KM2} 
and to general diagram algebras in \cite{HRe}, \cite{Ma}, see also references in these paper for 
other generalizations. An alternative approach to Gelfand models for certain classes of
groups can be found in \cite{CM}.

In our setup it is fairly straightforward to combine the above model with the construction used 
in \cite{KM2}, \cite{Ma} to produce a Gelfand model for $\cG_{(\ell,d)}$ (significantly simplifying 
arguments from \cite{APR2}). For each $f\in\cG_{(\ell,d)}$ denote by $\mathcal{I}^f$ the set 
of all involutions in $\cG_{(\ell,d)}(f,f)$. Define a representation 
$\mathrm{Gelfand}$ of $\mathbb{C}\cG_{(\ell,d)}$ as follows:
\begin{itemize}
\item Set $\mathrm{Gelfand}(f):=\mathbb{C}[\mathcal{I}^f]$.
\item For $f,g\in \cG_{(\ell,d)}$, $\sigma\in \cG_{(\ell,d)}(f,g)$ and $w\in \mathcal{I}^f$ set
\begin{displaymath}
\mathrm{Gelfand}(\sigma)\cdot w=(-1)^{\mathrm{inv}(\sigma,w)}(\sigma w \sigma^{-1}). 
\end{displaymath}
\item Extend this to the whole of $\mathbb{C}\cG_{(\ell,d)}$ by linearity.
\end{itemize}

\begin{corollary}\label{prop33}
The $\mathbb{C}\cG_{(\ell,d)}$-module $\mathrm{Gelfand}$ is a Gelfand model for $\mathbb{C}\cG_{(\ell,d)}$.
\end{corollary}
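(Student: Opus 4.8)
The plan is to reduce the claim to the known fact that $\mathbb{C}[\mathcal{I}]$ is a Gelfand model for each symmetric group $S_d$, exploiting the block decomposition of $\mathbb{C}\cG_{(\ell,d)}$ along connected components of the groupoid. First I would observe that $\mathrm{Gelfand}$ is supported only on one connected component at a time in the following sense: for fixed $\boldsymbol{\lambda}\in\Lambda(\ell,d)$ and $f$ of type $\boldsymbol{\lambda}$, the endomorphism group $\cG_{(\ell,d)}(f,f)$ is isomorphic to $S_{\boldsymbol{\lambda}}=S_{\lambda_1}\times\dots\times S_{\lambda_\ell}$, so $\mathcal{I}^f$ is precisely the set of involutions in $S_{\boldsymbol{\lambda}}$, which is the product of the sets of involutions in the factors. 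Hence $\mathbb{C}[\mathcal{I}^f]\cong \mathbb{C}[\mathcal{I}_{\lambda_1}]\otimes\dots\otimes\mathbb{C}[\mathcal{I}_{\lambda_\ell}]$ as $\cG_{(\ell,d)}(f,f)$-modules, and the twist by $(-1)^{\mathrm{inv}(\sigma,w)}$ factors as the tensor product of the corresponding twists on each factor (the statistic $\mathrm{inv}$ is additive over the colour classes when $\sigma$ preserves colours). So by the classical result cited just before the corollary, $\mathrm{Gelfand}(f)$ restricted to $\cG_{(\ell,d)}(f,f)$ is a Gelfand model for $S_{\boldsymbol{\lambda}}$, i.e. a multiplicity-free sum of all the $\mathscr{S}_{\mathbf{p}}$ with $\mathbf{p}\in\mathbf{T}_{\boldsymbol{\lambda}}$.

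Next I would verify that $\mathrm{Gelfand}$ is indeed a well-defined $\mathbb{C}\cG_{(\ell,d)}$-module, i.e. a $\mathbb{C}$-linear functor. The key identity to check is $\mathrm{Gelfand}(\tau\sigma)=\mathrm{Gelfand}(\tau)\mathrm{Gelfand}(\sigma)$ for composable morphisms, which amounts to the cocycle condition $\mathrm{inv}(\tau\sigma,w)\equiv \mathrm{inv}(\sigma,w)+\mathrm{inv}(\tau,\sigma w\sigma^{-1})\pmod 2$. This is exactly the verification carried out in \cite{KM2}, \cite{Ma} in the semigroup/diagram setting, and in \cite{IRS}, \cite{APR1} for $S_d$ itself; since a morphism in $\cG_{(\ell,d)}$ is just a colour-preserving bijection, the computation is formally identical to the symmetric-group case and I would either cite it or reproduce the short sign-counting argument. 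Note also that $\mathrm{Gelfand}(\sigma)$ is an isomorphism $\mathbb{C}[\mathcal{I}^f]\to\mathbb{C}[\mathcal{I}^g]$ whenever $\sigma\in\cG_{(\ell,d)}(f,g)$, as it should be on a groupoid, because conjugation by $\sigma$ is a bijection $\mathcal{I}^f\to\mathcal{I}^g$.

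Finally I would assemble the global statement from the local one. Because $\cG_{(\ell,d)}$ is a groupoid, a $\mathbb{C}\cG_{(\ell,d)}$-module decomposes as a direct sum over connected components, and on the component $\cG_{(\ell,d)}^{\boldsymbol{\lambda}}$ a module is determined by the $\cG_{(\ell,d)}(f_{\boldsymbol{\lambda}},f_{\boldsymbol{\lambda}})$-module obtained by evaluating at $f_{\boldsymbol{\lambda}}$; this is exactly the equivalence underlying Proposition~\ref{prop9}. Thus the multiplicity of the simple module $\mathrm{L}_{\mathbf{p}}$ (with $\mathbf{p}\in\mathbf{T}_{\boldsymbol{\lambda}}$) in $\mathrm{Gelfand}$ equals the multiplicity of $\mathscr{S}_{\mathbf{p}}$ in $\mathrm{Gelfand}(f_{\boldsymbol{\lambda}})$ as an $S_{\boldsymbol{\lambda}}$-module, which by the first paragraph is $1$. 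Since this holds for every $\boldsymbol{\lambda}$ and every $\mathbf{p}$, and since by Proposition~\ref{prop9}\eqref{prop9.2} these exhaust the simples, $\mathrm{Gelfand}$ is a multiplicity-free direct sum of all simple $\mathbb{C}\cG_{(\ell,d)}$-modules, i.e. a Gelfand model. The main obstacle is really just the bookkeeping in the first paragraph: checking that the sign twist $(-1)^{\mathrm{inv}(\sigma,w)}$ splits correctly as a tensor product over colour classes when $\sigma$ ranges over $S_{\boldsymbol{\lambda}}$, so that the classical $S_d$-model genuinely tensors up to an $S_{\boldsymbol{\lambda}}$-model; everything else is a formal consequence of the groupoid structure and Proposition~\ref{prop9}.
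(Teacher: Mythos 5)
Your proof is correct and takes essentially the same route as the paper's: check well-definedness of $\mathrm{Gelfand}$ via the classical construction of \cite{IRS}, \cite{APR1}, reduce along connected components of the groupoid to the statement that each $\mathrm{Gelfand}(f)$ is a Gelfand model for the vertex group $\cG_{(\ell,d)}(f,f)\cong S_{\boldsymbol{\lambda}}$, and combine this with the classification of simple $\mathbb{C}\cG_{(\ell,d)}$-modules from Proposition~\ref{prop9}. You simply make explicit the details (additivity of $\mathrm{inv}$ over colour classes, the cocycle identity for the sign) that the paper dismisses as following directly from the definitions.
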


\begin{proof}
The fact that  $\mathrm{Gelfand}$ is a $\mathbb{C}\cG_{(\ell,d)}$-module follows directly from 
our definitions and the construction of the Gelfand model for $S_d$ in \cite{APR1}. Taking 
into account the classification of simple $\mathbb{C}\cG_{(\ell,d)}$-modules in 
Subsection~\ref{s2.4}, to prove that $\mathrm{Gelfand}$ is a Gelfand model, we
need to prove that for each $f\in\cG_{(\ell,d)}$ the space $\mathrm{Gelfand}(f)$ is a Gelfand 
model for $\cG_{(\ell,d)}(f,f)$. This again follows directly from the definitions and the 
main result of \cite{APR1}.
\end{proof}

\section{Schur-Weyl dualities for $S(\ell,d)$}\label{s3}

\subsection{Classical Schur-Weyl duality}\label{s3.1}

For $n\in\mathbb{Z}_{>0}$, consider the (infinite!) group 
$\mathbf{GL}_n=\mathbf{GL}_n(\mathbb{C})$ and its {\em natural} representation 
$V:=\mathbb{C}^{n}$ with standard basis $\mathbf{v}:=(v_1,v_2,\dots,v_n)$. 
For $d\in\mathbb{Z}_{>0}$, consider the $d$-th tensor power 
$V^{\otimes d}$ with the usual diagonal coproduct action of $\mathbf{GL}_n$. 
The symmetric group $S_d$ acts on $V^{\otimes d}$ by permuting the components of the 
tensor product. This action clearly commutes with the action of $\mathbf{GL}_n$. 
Moreover, these two actions have the {\em double centralizer property} in the sense 
that every linear operator on $V^{\otimes d}$ which commutes with the action of 
$\mathbb{C}[\mathbf{GL}_n]$ is given by the action of $\mathbb{C}[S_d]$ and vice versa, 
depicted as follows:
\begin{equation}
\label{SchurWeyl}
\xymatrix{
V^{\otimes d} \ar@(dr,r)[]_{S_d} \ar@(dl,l)[]^{\mathrm{GL}_n} 
} 
\end{equation}
This is the classical {\em Schur-Weyl duality} from \cite{Schur1}, \cite{Schur2}, \cite{Weyl}.

The action of $\mathbb{C}[\mathbf{GL}_n]$ on $V^{\otimes d}$ is certainly never faithful.
The action of $\mathbb{C}[S_d]$ on $V^{\otimes d}$ is faithful if and only if $n\geq d$.
If $n<d$, then the kernel of this action is given by the ideal in $\mathbb{C}[S_d]$ 
corresponding to all Specht $S_d$-modules $\mathscr{S}_{\mu}$, where $\mu\vdash d$ has 
more than $n$ rows, see \cite[Theorem~9.1.2]{GW}.

\subsection{Splitting the left action}\label{s3.2}

Let now $l,n\in\mathbb{Z}_{>0}$ with  $l\leq n$. Fix a composition 
$\mathbf{k}=(k_1,k_2,\dots,k_\ell)\in\Lambda(l,n)$ in which  all $k_i>0$. 
Consider the block-diagonal subgroup $\mathbf{GL}_{\mathbf{k}}\cong 
\mathbf{GL}_{k_1}\times\mathbf{GL}_{k_2}\times\dots\times\mathbf{GL}_{k_\ell}$
in $\mathbf{GL}_{n}$ given by all matrices of the following form:
\begin{center}
\begin{picture}(100,100)
\drawline(10.00,90.00)(90.00,90.00)
\drawline(90.00,10.00)(90.00,90.00)
\drawline(90.00,10.00)(10.00,10.00)
\drawline(10.00,90.00)(10.00,10.00)
\drawline(90.00,30.00)(10.00,30.00)
\drawline(90.00,50.00)(10.00,50.00)
\drawline(90.00,70.00)(10.00,70.00)
\drawline(30.00,90.00)(30.00,10.00)
\drawline(50.00,90.00)(50.00,10.00)
\drawline(70.00,90.00)(70.00,10.00)
\put(21,80){\makebox(0,0)[cc]{\tiny $\mathbf{GL}_{k_1}$}}
\put(41,60){\makebox(0,0)[cc]{\tiny $\mathbf{GL}_{k_2}$}}
\put(81,20){\makebox(0,0)[cc]{\tiny $\mathbf{GL}_{k_\ell}$}}
\put(40,80){\makebox(0,0)[cc]{\tiny $0$}}
\put(60,80){\makebox(0,0)[cc]{\tiny $\cdots$}}
\put(80,80){\makebox(0,0)[cc]{\tiny $0$}}
\put(20,60){\makebox(0,0)[cc]{\tiny $0$}}
\put(60,60){\makebox(0,0)[cc]{\tiny $\cdots$}}
\put(80,60){\makebox(0,0)[cc]{\tiny $0$}}
\put(20,42){\makebox(0,0)[cc]{\tiny $\vdots$}}
\put(40,42){\makebox(0,0)[cc]{\tiny $\vdots$}}
\put(60,42){\makebox(0,0)[cc]{\tiny $\ddots$}}
\put(80,42){\makebox(0,0)[cc]{\tiny $\vdots$}}
\put(20,20){\makebox(0,0)[cc]{\tiny $0$}}
\put(40,20){\makebox(0,0)[cc]{\tiny $0$}}
\put(60,20){\makebox(0,0)[cc]{\tiny $\cdots$}}
\end{picture}
\end{center}

By restriction, the $\mathbf{GL}_{n}$-module $V$ from the previous subsection 
becomes a $\mathbf{GL}_{\mathbf{k}}$-module and $V$ decomposes as  $V=V_1\oplus V_2\oplus\dots\oplus V_\ell$, where $V_i$ the subspace of  $V$ spanned by all $v_j$ where
\begin{displaymath}
j\in\{k_1+k_2+\dots+k_{i-1}+1,k_1+k_2+\dots+k_{i-1}+2,\dots,k_1+k_2+\dots+k_{i}\}. 
\end{displaymath}
In particular we have for $i,j\in \{1,2,\dots,\ell\}$ that $V_i$ is the natural $\mathbf{GL}_{k_i}$-module while the action
of $\mathbf{GL}_{k_i}$ on $V_j$ is trivial whenever $j\neq i$.

\subsection{$\cG_{(\ell,d)}$-action on $V^{\otimes d}$}\label{s3.3}

The space $V^{\otimes d}$ has the structure of a 
$\mathbb{C}\cG_{(\ell,d)}$-module 
$\mathrm{G}$ defined as follows:
\begin{itemize}
\item For $f\in \cG_{(\ell,d)}$, we set $\mathrm{G}(f):=V_{f(1)}\otimes 
V_{f(2)}\otimes\dots\otimes V_{f(d)}$.
\item For $f,g\in \cG_{(\ell,d)}$ of the same type and $\sigma\in \cG_{(\ell,d)}(f,g)$, the linear 
map $\mathrm{G}(\sigma)$ acts by permuting factors of the tensor product, namely,
\begin{equation}\label{eqnn2}
\sigma(w_1\otimes w_2\otimes\dots\otimes w_d):=
w_{\sigma^{-1}(1)}\otimes w_{\sigma^{-1}(2)}\otimes\dots\otimes w_{\sigma^{-1}(d)}
\end{equation}
where $w_i\in V_{f(i)}$ for all $i$ (note that permutation of components induces the
{\em opposite} action on indices of the components, which explains the appearance
of $\sigma^{-1}$ in \eqref{eqnn2}).
\end{itemize}
It is straightforward to check that this gives a well-defined $\mathbb{C}\cG_{(\ell,d)}$-module.
Using the equivalence \eqref{eq1}, this defines on $V^{\otimes d}$ the structure of an
$A_{(\ell,d)}$-module.

\begin{lemma}\label{lem11}
{\tiny\hspace{2mm}}

\begin{enumerate}[$($i$)$]
\item\label{lem11.1} The action of $\mathbf{GL}_{\mathbf{k}}$ preserves $\mathrm{G}(f)$
for each  $f\in\mathbb{C}\cG_{(\ell,d)}$.
\item\label{lem11.2} The action of $A_{(\ell,d)}$ on $V^{\otimes d}$ commutes with the action of $\mathbf{GL}_{\mathbf{k}}$.
\end{enumerate}
\end{lemma}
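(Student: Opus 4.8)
The plan is to verify both claims by a direct computation on simple tensors, exploiting the fact that the $\mathbb{C}\cG_{(\ell,d)}$-module $\mathrm{G}$ is built so that each $\mathrm{G}(f)=V_{f(1)}\otimes\dots\otimes V_{f(d)}$ is manifestly a tensor product of $\mathbf{GL}_{\mathbf{k}}$-submodules of $V$. For claim~\eqref{lem11.1}, I would observe that each $V_i\subseteq V$ is by construction a $\mathbf{GL}_{\mathbf{k}}$-submodule (indeed $\mathbf{GL}_{k_i}$ acts on $V_i$ as its natural module and the other factors act trivially), so the diagonal action of $\mathbf{GL}_{\mathbf{k}}$ on $V^{\otimes d}$ preserves the subspace $\mathrm{G}(f)=V_{f(1)}\otimes\dots\otimes V_{f(d)}$ for every object $f$; extending linearly covers all $f\in\mathbb{C}\cG_{(\ell,d)}$. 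This is essentially immediate from the decomposition $V=V_1\oplus\dots\oplus V_\ell$ in Subsection~\ref{s3.2}.

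For claim~\eqref{lem11.2}, since $A_{(\ell,d)}=\bigoplus_{f,g}\mathbb{C}\cG_{(\ell,d)}(f,g)$ and the action decomposes over the blocks $\mathrm{G}(f)$, it suffices to check that every morphism $\sigma\in\cG_{(\ell,d)}(f,g)$ acts on $V^{\otimes d}$ in a way that commutes with $\mathbf{GL}_{\mathbf{k}}$. I would take a simple tensor $w_1\otimes\dots\otimes w_d$ with $w_i\in V_{f(i)}$ and an element $X=(X_1,\dots,X_\ell)\in\mathbf{GL}_{\mathbf{k}}$ acting diagonally, and compare $\mathrm{G}(\sigma)(X\cdot(w_1\otimes\dots\otimes w_d))$ with $X\cdot\mathrm{G}(\sigma)(w_1\otimes\dots\otimes w_d)$. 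Using \eqref{eqnn2}, both are seen to equal $Xw_{\sigma^{-1}(1)}\otimes\dots\otimes Xw_{\sigma^{-1}(d)}$; the only point that needs the color-preserving condition $g\circ\sigma=f$ is that the factor $Xw_{\sigma^{-1}(j)}$ lands in $V_{f(\sigma^{-1}(j))}=V_{g(j)}$, so that the $\mathbf{GL}_{\mathbf{k}}$-action on $\mathrm{G}(g)$ after applying $\sigma$ agrees with what we get by permuting first; since $X$ acts on $V$ by a single linear operator independent of the tensor slot, this matches and the two expressions coincide. Extending bilinearly over all morphisms and all $f,g$ gives that the full $A_{(\ell,d)}$-action commutes with $\mathbf{GL}_{\mathbf{k}}$.

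I do not anticipate a serious obstacle here: both statements are bookkeeping consequences of the definitions, the classical fact that permuting tensor factors commutes with a diagonal group action, and the compatibility of the color-preserving condition on morphisms with the labeling $\mathrm{G}(f)=\bigotimes_i V_{f(i)}$. The mildly delicate point worth spelling out is the index bookkeeping in \eqref{eqnn2} — keeping straight that a morphism $\sigma$ with $g\circ\sigma=f$ sends the $V_{f(\sigma^{-1}(j))}$ slot to the $j$-th slot of $\mathrm{G}(g)$, and that $V_{f(\sigma^{-1}(j))}=V_{g(j)}$ precisely because $f=g\circ\sigma$ — so that the $\mathbf{GL}_{\mathbf{k}}$-action is well-defined on both $\mathrm{G}(f)$ and $\mathrm{G}(g)$ and intertwined by $\mathrm{G}(\sigma)$. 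Once that identification is made explicit, the commutation in part~\eqref{lem11.2} is a one-line check on simple tensors.
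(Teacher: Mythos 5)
Your argument is correct and follows essentially the same route as the paper: both claims are verified by a direct computation on simple tensors, with claim~(i) coming from the decomposition $V=V_1\oplus\dots\oplus V_\ell$ into $\mathbf{GL}_{\mathbf{k}}$-submodules and claim~(ii) from comparing $\mathrm{G}(\sigma)\circ x$ with $x\circ\mathrm{G}(\sigma)$ via the identity $f(\sigma^{-1}(j))=g(j)$ forced by $g\circ\sigma=f$. Your phrasing with a single operator $X$ acting diagonally is just a mild repackaging of the paper's block-wise computation $x_{f(i)}\cdot w_i$, so there is nothing substantive to add.
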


\begin{proof}
Let $w_1\otimes w_2\otimes\dots\otimes w_d\in V_{f(1)}\otimes V_{f(2)}\otimes\dots\otimes
V_{f(d)}$. Choose any  $x_i\in \mathbf{GL}_{k_i}$ for all $i$ and let 
$x=\mathrm{diag}(x_1,x_2,\dots,x_\ell)$ be the corresponding element in 
$\mathbf{GL}_{\mathbf{k}}$. Then, using the definitions and the fact that
$\mathbf{GL}_{k_i}$ acts trivially on $V_j$ for $i\neq j$, we have
\begin{equation}\label{eqnn3}
x\cdot (w_1\otimes w_2\otimes \dots\otimes w_d)=
(x_{f(1)}\cdot w_1)\otimes (x_{f(2)}\cdot w_2)\otimes \cdots \otimes (x_{f(d)}\cdot w_d).
\end{equation}
It follows that the action of $\mathbf{GL}_{\mathbf{k}}$
preserves each $\mathrm{G}(f)$. This proves claim~\eqref{lem11.1}.
Moreover, this also implies that to prove claim~\eqref{lem11.2} 
it is enough to show that the action of 
$\mathbf{GL}_{\mathbf{k}}$ commutes with the action of each $\sigma\in \cG_{(\ell,d)}(f,g)$.

Applying $\sigma$ to \eqref{eqnn3}, we get
\begin{multline}\label{eqnn1}
\sigma\circ x\cdot (w_1\otimes w_2\otimes \dots\otimes w_d)=\\=
(x_{f(\sigma^{-1}(1))}\cdot w_{\sigma^{-1}(1)})\otimes (x_{f(\sigma^{-1}(2))}\cdot 
w_{\sigma^{-1}(2)})\otimes \cdots \otimes (x_{f(\sigma^{-1}(d))}\cdot w_{\sigma^{-1}(d)}).
\end{multline}
Similarly to \eqref{eqnn3}, acting by $x$ on \eqref{eqnn2}, we get 
\begin{multline*}
x\circ\sigma\cdot (w_1\otimes w_2\otimes \dots\otimes w_d)=\\=
(x_{g(1)}\cdot w_{\sigma^{-1}(1)})\otimes (x_{g(2)}\cdot w_{\sigma^{-1}(2)})
\otimes \cdots \otimes (x_{g(d)}\cdot w_{\sigma^{-1}(d)}).
\end{multline*}
The latter coincides with \eqref{eqnn1} since $f(i)=g(\sigma(i))$ for all $i$ and therefore 
also $f(\sigma^{-1}(i))=g(i)$ for all $i$. This proves claim~\eqref{lem11.2} and thus
completes the proof.
\end{proof}

\subsection{Schur-Weyl duality for $S(\ell,d)$}\label{s3.4}

The Hecke-algebra version of the next theorem appears in \cite{Hu}, \cite{SS}, \cite{Sh}, see 
also further cases in \cite{Re} and \cite{ATY}.

\begin{theorem}\label{thm12}
The actions of $A_{(\ell,d)}$ and $\mathbf{GL}_{\mathbf{k}}$ on $V^{\otimes d}$ have the 
double centralizer property 
\begin{equation}
\label{doublecentr}
\xymatrix{
V^{\otimes d} \ar@(dr,r)[]_{A_{(\ell,d)}} \ar@(dl,l)[]^{\mathrm{GL}_{\mathbf{k}}} 
} 
\end{equation}
in the sense that they generate each others centralizers. 
\end{theorem}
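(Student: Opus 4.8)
The plan is to deduce the double centralizer property from the classical Schur--Weyl duality \eqref{SchurWeyl} by transporting it through the isomorphism $\Phi$ of Theorem~\ref{thm7} (equivalently, by working directly with $\mathbb{C}[S(\ell,d)]$ acting on $V^{\otimes d}$), combined with a block-decomposition argument that separates the factors $\mathbf{GL}_{k_1},\dots,\mathbf{GL}_{k_\ell}$. First I would observe that under the equivalence \eqref{eq1} and the isomorphism $\overline{\Phi}$ of \eqref{equiv}, the $A_{(\ell,d)}$-module $\mathrm{G}$ on $V^{\otimes d}$ corresponds to a genuine $\mathbb{C}[S(\ell,d)]$-module structure on $V^{\otimes d}$, namely the one in which $S_d$ permutes tensor factors and $s_0$ acts diagonally with eigenvalue $\xi_\ell^{i}$ on the summand $V_i$ of the first factor. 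So the statement to prove is equivalent to: the actions of $\mathbf{C}_\ell\wr S_d$ and of $\mathbf{GL}_{\mathbf{k}}$ on $V^{\otimes d}$ generate each other's centralizers. That both actions centralize each other is Lemma~\ref{lem11}; what remains is the two ``generate'' assertions.

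For the assertion that $\operatorname{End}_{\mathbf{GL}_{\mathbf{k}}}(V^{\otimes d})$ is generated by the image of $\mathbb{C}[S(\ell,d)]$, I would decompose $V^{\otimes d} = (V_1\oplus\cdots\oplus V_\ell)^{\otimes d}$ into the $\ell^d$ summands $\mathrm{G}(f) = V_{f(1)}\otimes\cdots\otimes V_{f(d)}$ indexed by objects $f\in\cG_{(\ell,d)}$; by Lemma~\ref{lem11}(i) each of these is $\mathbf{GL}_{\mathbf{k}}$-stable, so a $\mathbf{GL}_{\mathbf{k}}$-endomorphism is a matrix of maps $\mathrm{G}(f)\to\mathrm{G}(g)$. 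Here the key point is that $\mathbf{GL}_{\mathbf{k}}$ acts on $\mathrm{G}(f) \cong V_1^{\otimes\lambda_1}\otimes\cdots\otimes V_\ell^{\otimes\lambda_\ell}$ (after reordering) as the external product of the natural actions of the $\mathbf{GL}_{k_i}$ on $V_i^{\otimes\lambda_i}$, the other factors acting trivially; hence by classical Schur--Weyl duality applied to each $\mathbf{GL}_{k_i}$ (and the obvious fact that the centralizer of an external tensor product of algebras is the tensor product of the centralizers), $\operatorname{End}_{\mathbf{GL}_{\mathbf{k}}}(\mathrm{G}(f)) = \mathbb{C}[S_{\lambda_1}]\otimes\cdots\otimes\mathbb{C}[S_\ell] = \mathbb{C}[S_{\boldsymbol{\lambda}_f}]$, and similarly $\operatorname{Hom}_{\mathbf{GL}_{\mathbf{k}}}(\mathrm{G}(f),\mathrm{G}(g))$ is $0$ unless $\boldsymbol{\lambda}_f = \boldsymbol{\lambda}_g$, in which case it is spanned by the ``permute-the-factors'' maps coming from $\cG_{(\ell,d)}(f,g)$. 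Summing over $f,g$ gives exactly $A_{(\ell,d)}$, so its image is all of $\operatorname{End}_{\mathbf{GL}_{\mathbf{k}}}(V^{\otimes d})$.

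For the reverse assertion, that $\operatorname{End}_{A_{(\ell,d)}}(V^{\otimes d})$ is generated by $\mathbf{GL}_{\mathbf{k}}$, I would use the general fact (the ``double'' part of the double centralizer theorem for semisimple algebras) that if $A$ is a semisimple algebra acting on a finite-dimensional space $M$, then $\operatorname{End}_{\operatorname{End}_A(M)}(M)$ equals the image of $A$; applied here with $A = A_{(\ell,d)}$, which is semisimple by Theorem~\ref{thm7} since $\mathbb{C}[S(\ell,d)]$ is, this says $\operatorname{End}_{\operatorname{image}(A_{(\ell,d)})}(V^{\otimes d}) = \operatorname{End}_{\operatorname{End}_{\mathbf{GL}_{\mathbf{k}}}(V^{\otimes d})}(V^{\otimes d})$, and by the first half this is the bicommutant of $\mathbb{C}[\mathbf{GL}_{\mathbf{k}}]$ in $\operatorname{End}(V^{\otimes d})$. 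Since $\mathbb{C}[\mathbf{GL}_{\mathbf{k}}]$ is a (possibly non-unital issue aside, but it contains the identity) not a priori semisimple infinite-dimensional algebra, I would instead note its image in $\operatorname{End}(V^{\otimes d})$ is a finite-dimensional semisimple algebra — it is the image of the group algebra of a reductive group in a rational representation — and then the same double centralizer theorem gives that its bicommutant is itself, completing the argument.

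I expect the main obstacle to be the bookkeeping in the middle paragraph: carefully identifying $\operatorname{Hom}_{\mathbf{GL}_{\mathbf{k}}}(\mathrm{G}(f),\mathrm{G}(g))$ with $\mathbb{C}[\cG_{(\ell,d)}(f,g)]$ and checking that the multiplication matches, i.e. that summing the classical Schur--Weyl statements over all pairs $(f,g)$ really reconstitutes the algebra $A_{(\ell,d)}$ rather than something larger. In particular one must check there are no ``extra'' $\mathbf{GL}_{\mathbf{k}}$-homomorphisms $\mathrm{G}(f)\to\mathrm{G}(g)$ between objects of different type — this uses that $V_i$ and $V_j$ are non-isomorphic and in fact have no common $\mathbf{GL}_{\mathbf{k}}$-constituents for $i\neq j$, so a tensor factor coloured $i$ cannot be matched with one coloured $j$ — and that within a single type the relevant Hom space is exactly $\lambda_1!\cdots\lambda_\ell!$-dimensional, matching Proposition~\ref{prop3}(ii). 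Everything else is a formal consequence of the classical double centralizer theorem and Theorem~\ref{thm7}.
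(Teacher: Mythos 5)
Your argument is essentially the paper's own proof: decompose $V^{\otimes d}$ into the $\mathbf{GL}_{\mathbf{k}}$-stable summands $\mathrm{G}(f)$, use classical Schur--Weyl duality blockwise (together with the absence of intertwiners between blocks of different type) to see that the image of $A_{(\ell,d)}$ exhausts $\operatorname{End}_{\mathbf{GL}_{\mathbf{k}}}(V^{\otimes d})$, and then conclude by the double centralizer theorem using semisimplicity on both sides, exactly as the paper does via reductivity of $\mathbf{GL}_{\mathbf{k}}$ and \cite{KP}. The only caution is your side remark that $\operatorname{Hom}_{\mathbf{GL}_{\mathbf{k}}}(\mathrm{G}(f),\mathrm{G}(g))$ has dimension exactly $\boldsymbol{\lambda}!$: this holds only when all $k_i\geq\lambda_i$ (otherwise the span of the maps coming from $\cG_{(\ell,d)}(f,g)$ is a proper quotient, cf.\ Lemma~\ref{lem17}), but since the theorem only requires surjectivity onto the centralizer, this does not affect the validity of your argument.
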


\begin{proof}
We consider the action of $\mathbb{C}\cG_{(\ell,d)}$ instead of the action of $A_{(\ell,d)}$.
By Lemma~\ref{lem11}\eqref{lem11.1}, each $\mathrm{G}(f)$ is invariant under the action 
of $\mathbf{GL}_{\mathbf{k}}$. We claim that $A_{(\ell,d)}$ surjects onto the space of 
$\mathbf{GL}_{\mathbf{k}}$-intertwiners between the $\mathbf{GL}_{\mathbf{k}}$-modules
$\mathrm{G}(f)$ and $\mathrm{G}(g)$, where $f$ and $g$ are of the same type 
$\boldsymbol{\lambda}$. Indeed, the action of $\mathbf{GL}_{\mathbf{k}}$ on both 
$\mathrm{G}(f)$ and $\mathrm{G}(g)$ can be computed using \eqref{eqnn3}. It follows that,
as $\mathbf{GL}_{\mathbf{k}}$-modules, both $\mathrm{G}(f)$ and $\mathrm{G}(g)$
can be identified with the external tensor product of the $\lambda_i$-th tensor powers
$V_i^{\otimes \lambda_i}$ of the natural $\mathbf{GL}_{k_i}$-modules $V_i$, where 
$i=1,2,\dots,\ell$. By construction, the ``local'' action of $\mathbb{C}\cG_{(\ell,d)}$ on 
$V_i^{\otimes \lambda_i}$ is given by $S_{\lambda_i}$ which acts by permuting components
of the tensor product. This is exactly the setup of the classical Schur-Weyl duality \eqref{SchurWeyl}
and hence it follows that, locally, the  action of  $\mathbb{C}\cG_{(\ell,d)}$ does generate
the whole centralizer of the $\mathbf{GL}_{k_i}$-module $V_i^{\otimes \lambda_i}$.
Note that this local action of $\mathbb{C}\cG_{(\ell,d)}$ is independent of all
other components.

Since the (external) tensor product of simple finite dimensional $\mathbb{C}$-algebras is a 
simple $\mathbb{C}$-algebra (as $\mathbb{C}$ is algebraically closed), 
by comparing dimension it follows that $\mathbb{C}\cG_{(\ell,d)}$
does generate all $\mathbf{GL}_{\mathbf{k}}$-intertwiners between
$\mathrm{G}(f)$ and $\mathrm{G}(g)$. Summing up over all $f$ and $g$ we get that
the action of $A_{(\ell,d)}$ generates the centralizer of the
$\mathbf{GL}_{\mathbf{k}}$-action on $V^{\otimes d}$.

As $\mathbf{GL}_{\mathbf{k}}$ is reductive, its action on $V^{\otimes d}$ is 
semi-simple. The group algebra of the finite group $A_{(\ell,d)}$ is clearly semisimple
and is the centralizer of the action of $\mathbf{GL}_{\mathbf{k}}$ by the above. 
Therefore the desired double centralizer property follows from the Double
Centralizer Theorem, see \cite[Subsection~3.2]{KP}.
\end{proof}

\subsection{An extremal example: the symmetric inverse semigroup}\label{s3.5}

Recall that, for $d\in\mathbb{Z}_{\geq 0}$, the symmetric inverse semigroup $IS_d$
(a.k.a. the rook monoid $R_d$)
is the monoid of all bijections between subsets of $\underline{d}$, see 
\cite[Section~2.5]{GM} for details. The symmetric group $S_d$ is the group of invertible 
elements in $IS_d$. The monoid $IS_d$ is generated by $S_d$ and the (idempotent) identity
transformation $\varepsilon_1$ on the subset $\{2,3,\dots,n\}$ of $\underline{d}$
(for a ``Coxeter-like'' presentation of $IS_d$ we refer the reader to \cite[Remark~4.13]{KM}).

If $l=2$, $k_1=n-1$ and $k_2=1$, then the action on the left hand side of the Schur-Weyl 
duality from Theorem~\ref{thm12} can be viewed as the action of $\mathbf{GL}_{n-1}$, 
with the trivial action on $V_2$. This is exactly the setup of Solomon's version of 
Schur-Weyl duality for $IS_d$  established in \cite{So}. 
The action of $\mathbb{C}[IS_d]$ is faithful for $d<n$.

Put together with Theorem~\ref{thm7}, this gives a surjection 
$\mathbb{C}[S(2,d)]\tto\mathbb{C}[IS_d]$ which was already observed on the level of the 
corresponding Hecke algebras in \cite{HR}. For convenience, we give here an explicit formula 
for this surjection in terms of Coxeter generators of the type $B$ Weyl group $S(2,d)$:

\begin{lemma}\label{lem15}
There is a unique epimorphism of algebras $\Psi:\mathbb{C}[S(2,d)]\tto\mathbb{C}[IS_d]$ such that
\begin{displaymath}
\Psi(s_i)=s_i,\quad i=1,2,\dots,d-1;\qquad\qquad \Psi(s_0)=2\varepsilon_1-e. 
\end{displaymath}
\end{lemma}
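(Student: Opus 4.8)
The plan is to verify that the prescribed assignment $s_i\mapsto s_i$ for $i=1,\dots,d-1$ and $s_0\mapsto 2\varepsilon_1-e$ respects the defining relations \eqref{eq3} for $S(2,d)$; since $\mathbb{C}[S(2,d)]$ is presented by those relations, this gives a well-defined algebra homomorphism, and uniqueness is automatic because the $s_i$ generate. Surjectivity then follows from the fact, recalled in Subsection~\ref{s3.5}, that $IS_d$ is generated by $S_d$ together with $\varepsilon_1$, and $\varepsilon_1=\tfrac12(\Psi(s_0)+e)$ lies in the image.

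The relations involving only $s_1,\dots,s_{d-1}$ are sent to the same relations in $\mathbb{C}[IS_d]$, hence hold trivially. For the relations involving $s_0$, write $t:=2\varepsilon_1-e$. First, $\varepsilon_1$ is idempotent, so $t^2=4\varepsilon_1^2-4\varepsilon_1+e=e$, which gives the relation $s_0^2=e$ (the $\ell=2$ case of $s_0^\ell=e$). Next, for $|i|>1$, i.e. $i=2,3,\dots,d-1$, the transposition $s_i$ fixes $1$ and $2$ and hence commutes with $\varepsilon_1$ (the identity on $\{2,3,\dots,d\}$); therefore $s_it=ts_i$, giving $s_0s_i=s_is_0$. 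The only genuinely nontrivial check is the mixed braid relation $s_0s_1s_0s_1=s_1s_0s_1s_0$, i.e. $ts_1ts_1=s_1ts_1t$. Here I expect the cleanest route is to note that $s_1ts_1=2(s_1\varepsilon_1s_1)-e=2\varepsilon_2-e$, where $\varepsilon_2$ is the identity transformation on $\{1,3,4,\dots,d\}$, and then observe that $\varepsilon_1$ and $\varepsilon_2$ commute in $IS_d$ (both are partial identities, and partial identities on subsets always commute, their product being the identity on the intersection). Consequently $t$ and $s_1ts_1$ commute, which is exactly $ts_1ts_1=s_1ts_1t$ after rearranging. This is the main (and essentially only) obstacle, and it reduces to the elementary commutativity of idempotent partial identities in $IS_d$.

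An alternative, perhaps even shorter, justification of the whole lemma: by Theorem~\ref{thm7} we already have $\mathbb{C}[S(2,d)]\cong A_{(2,d)}$, and under $\Phi$ the element $s_0$ corresponds to $\sum_f\xi_2^{f(1)}e_f=\sum_f(-1)^{f(1)}e_f$. One can then cite Solomon's Schur--Weyl duality for $IS_d$ from \cite{So}, realized via the $\ell=2$, $k_1=n-1$, $k_2=1$ specialization of Theorem~\ref{thm12}: the action of $A_{(2,d)}$ on $V^{\otimes d}$ factors through $\mathbb{C}[IS_d]$, and tracking where the generators go recovers precisely $s_i\mapsto s_i$ and the image of $s_0$ acting as $2\varepsilon_1-e$ on the natural $IS_d$-module, which (for $d<n$, where the $IS_d$-action is faithful) pins down the map. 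I would present the direct relation check as the main proof, since it is self-contained and does not depend on the faithfulness hypothesis $d<n$, and mention the Schur--Weyl route as a remark.
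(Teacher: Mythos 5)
Your proposal is correct and follows essentially the same route as the paper: one checks the defining relations \eqref{eq3} directly, with $(2\varepsilon_1-e)^2=e$ from idempotency and the mixed braid relation as the only nontrivial point, and surjectivity from $IS_d=\langle S_d,\varepsilon_1\rangle$. Your treatment of the braid relation via $s_1\varepsilon_1s_1=\varepsilon_2$ and commutativity of partial identities is just a tidy repackaging of the paper's reduction to $\varepsilon_1s_1\varepsilon_1s_1=s_1\varepsilon_1s_1\varepsilon_1$ (both sides being the identity on $\{3,\dots,d\}$), so no substantive difference.
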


\begin{proof}
Surjectivity is directly clear as $\mathbb{C}[IS_d]$ is generated by $S_d$ and $\varepsilon_1$. 
To verify that this is a homomorphism, the only non-trivial relations to check are those 
involving the image of $s_0$ with itself and with the image of $s_1$. For the first one, 
using the fact that $\varepsilon_1$ is an idempotent, we have
\begin{displaymath}
(2\varepsilon_1-e)^2=4\varepsilon_1^2-4\varepsilon_1+e=4\varepsilon_1-4\varepsilon_1+e=e.
\end{displaymath}
It remains to check that 
$(2\varepsilon_1-e)s_1(2\varepsilon_1-e)s_1=s_1(2\varepsilon_1-e)s_1(2\varepsilon_1-e)$.
Opening the brackets and canceling the obvious equal summands, this reduces 
(up to a non-zero scalar) to a desired equality  $\varepsilon_1s_1\varepsilon_1s_1=s_1\varepsilon_1s_1\varepsilon_1$. 
It is straightforward to check that both sides of the latter are, in fact, equal 
to the identity transformation on the subset $\{3,4,\dots,d\}$ of $\underline{d}$.
\end{proof}

The homomorphism $\Psi$ allows us to view simple $\mathbb{C}[IS_d]$-modules
as simple $\mathbb{C}[S(2,d)]$-modules. Combining Solomon's version of the
Schur-Weyl duality mentioned above and the results of Subsection~\ref{s3.7} below,
one gets that the simple $\mathbb{C}[S(2,d)]$-modules obtained in this way
are exactly the modules $L_{(\mu_1,\mu_2)}$, where the partition $\mu_2$ has one part.
This parametrization is used, for instance, in \cite{ES2}.
In particular, our remarks on the basis in simple modules from the end of 
Subsection~\ref{s2.4} correspond in this case to the main result in \cite{Gr}.

\subsection{The kernel of the $A_{(\ell,d)}$-action}\label{s3.7}

The action of $\mathbf{GL}_{\mathbf{k}}$ on $V^{\otimes d}$ is, of course, never faithful.
The action of $A_{(\ell,d)}$ on $V^{\otimes d}$ is usually not faithful either. The kernel of the
latter action (or rather of the action of $\mathbb{C}\cG_{(\ell,d)}$) can be described in 
terms similar to the description of the kernel of the action of $\mathbb{C}[S_d]$ in the 
classical Schur-Weyl duality.

\begin{lemma}\label{lem17}
The kernel of the action of  $\mathbb{C}\cG_{(\ell,d)}$ on $V^{\otimes d}$ is given by the  
ideal in $\mathbb{C}\cG_{(\ell,d)}$ corresponding to all $\mathbb{C}\cG_{(\ell,d)}$-modules
$\mathrm{L}_{\mathbf{p}}$ for $\mathbf{p}\in \mathbf{T}_{\boldsymbol{\lambda}}$ 
with $\boldsymbol{\lambda}\in\Lambda(\ell,d)$, such that for some $i\in\{1,2,\dots,\ell\}$ 
the partition $p_i\vdash\lambda_i$ has more than $k_i$  rows.
\end{lemma}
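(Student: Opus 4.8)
\textbf{Proof proposal for Lemma~\ref{lem17}.}

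The plan is to reduce everything to the classical description of the kernel of $\mathbb{C}[S_m]$ acting on $W^{\otimes m}$ for $W=\mathbb{C}^k$, via the decomposition of $\mathbb{C}\cG_{(\ell,d)}$ into connected components worked out in Subsection~\ref{s2.4}. Since $\mathbb{C}\cG_{(\ell,d)}$ is semisimple, its kernel on $\mathrm{G}=V^{\otimes d}$ is the two-sided ideal that is the sum of the blocks corresponding to those simple modules $\mathrm{L}_{\mathbf{p}}$ which do not occur in $\mathrm{G}$. So the statement is equivalent to the assertion: $\mathrm{L}_{\mathbf{p}}$ occurs in $\mathrm{G}$ if and only if, writing $\mathbf{p}=(p_1,\dots,p_\ell)$ with $p_i\vdash\lambda_i$, every $p_i$ has at most $k_i$ rows. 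First I would fix $\boldsymbol{\lambda}\in\Lambda(\ell,d)$ and work inside one connected component $\cG_{(\ell,d)}^{\boldsymbol{\lambda}}$; since $\mathrm{L}_{\mathbf{p}}$ is supported only on objects of type $\boldsymbol{\lambda}$, the multiplicity of $\mathrm{L}_{\mathbf{p}}$ in $\mathrm{G}$ can be computed on any single object, say $f_{\boldsymbol{\lambda}}$, as the multiplicity of $\mathscr{S}_{\mathbf{p}}$ in $\mathrm{G}(f_{\boldsymbol{\lambda}})$ as a module over $\cG_{(\ell,d)}(f_{\boldsymbol{\lambda}},f_{\boldsymbol{\lambda}})\cong S_{\boldsymbol{\lambda}}$.

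The key computation is then exactly the one already used in the proof of Theorem~\ref{thm12}: as an $S_{\boldsymbol{\lambda}}$-module,
\begin{displaymath}
\mathrm{G}(f_{\boldsymbol{\lambda}})\;\cong\;V_1^{\otimes\lambda_1}\boxtimes V_2^{\otimes\lambda_2}\boxtimes\cdots\boxtimes V_\ell^{\otimes\lambda_\ell},
\end{displaymath}
where $V_i=\mathbb{C}^{k_i}$ and $S_{\lambda_i}$ acts on $V_i^{\otimes\lambda_i}$ by permuting tensor factors. Hence $\mathscr{S}_{\mathbf{p}}=\mathscr{S}_{p_1}\otimes\cdots\otimes\mathscr{S}_{p_\ell}$ occurs in $\mathrm{G}(f_{\boldsymbol{\lambda}})$ if and only if each $\mathscr{S}_{p_i}$ occurs in $V_i^{\otimes\lambda_i}$ as an $S_{\lambda_i}$-module. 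By the classical Schur--Weyl statement recalled in Subsection~\ref{s3.1} (see \cite[Theorem~9.1.2]{GW}), $\mathscr{S}_{p_i}$ occurs in $(\mathbb{C}^{k_i})^{\otimes\lambda_i}$ precisely when $p_i$ has at most $k_i$ rows, and moreover each such $\mathscr{S}_{p_i}$ occurs with positive multiplicity. Since the $\ell$ tensor factors are independent, $\mathscr{S}_{\mathbf{p}}$ occurs in $\mathrm{G}(f_{\boldsymbol{\lambda}})$ with positive multiplicity iff every $p_i$ has $\le k_i$ rows, and with multiplicity zero otherwise; this is the claimed dichotomy.

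Finally I would package this block-theoretically. Because $A_{(\ell,d)}$ (equivalently $\mathbb{C}\cG_{(\ell,d)}$) is semisimple with simple modules $\{\mathrm{L}_{\mathbf{p}}\}$, it splits as a direct sum of matrix blocks indexed by the pairs $(\boldsymbol{\lambda},\mathbf{p})$, and the kernel of any representation is the sum of those blocks whose simple module is absent. By the previous paragraph, the absent simples are exactly the $\mathrm{L}_{\mathbf{p}}$ with some $p_i$ having more than $k_i$ rows, so the kernel is the ideal spanned by those blocks — which is the statement of the lemma. The only point requiring a little care is the bookkeeping translating ``multiplicity in $\mathrm{G}$'' to ``multiplicity on a single object $f_{\boldsymbol{\lambda}}$'', which follows since $\mathrm{G}$ restricted to the component $\cG_{(\ell,d)}^{\boldsymbol{\lambda}}$ is, under the equivalence \eqref{eq1} with modules over $\cG_{(\ell,d)}(f_{\boldsymbol{\lambda}},f_{\boldsymbol{\lambda}})$, just the $S_{\boldsymbol{\lambda}}$-module $\mathrm{G}(f_{\boldsymbol{\lambda}})$; this is routine and is essentially already contained in the discussion preceding Theorem~\ref{thm12}. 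I do not anticipate a serious obstacle: the main content is the identification of $\mathrm{G}(f_{\boldsymbol{\lambda}})$ as an external tensor product, which is established, plus the classical kernel computation in type $A$, which is standard.
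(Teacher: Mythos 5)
Your proof is correct and takes essentially the same route as the paper: restrict to the endomorphism algebra $\mathbb{C}\cG_{(\ell,d)}(f_{\boldsymbol{\lambda}},f_{\boldsymbol{\lambda}})\cong\mathbb{C}[S_{\boldsymbol{\lambda}}]$ acting on $\mathrm{G}(f_{\boldsymbol{\lambda}})\cong V_1^{\otimes\lambda_1}\boxtimes\cdots\boxtimes V_\ell^{\otimes\lambda_\ell}$ and invoke the classical type $A$ description of the kernel from Subsection~\ref{s3.1}. You merely spell out the semisimple block bookkeeping and the Morita-type reduction to a single object per component, which the paper leaves implicit.
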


\begin{proof}
It is enough to determine the part of the kernel inside 
$\mathbb{C}\cG_{(\ell,d)}(f_{\boldsymbol{\lambda}},f_{\boldsymbol{\lambda}})$.
The latter is an algebra acting on $\mathrm{G}(f_{\boldsymbol{\lambda}})$ and for it the
statement follows from the classical Schur-Weyl duality, see Subsection~\ref{s3.1}.
\end{proof}

For example, in the case $\mathbf{G}=\mathbf{GL}_1\times\mathbf{GL}_1\times\dots\times
\mathbf{GL}_1$, all simple $\mathbb{C}\cG_{(\ell,d)}$-modules which are not
indexed by $1$-row multi-partitions are killed.

\begin{remark}\label{remnnew}
{\rm
An important example here is the case where $\ell=2$ and where we have the action of $\mathbf{G}:= \mathbf{GL}_{n}\times\mathbf{GL}_{n}\subset\mathbf{GL}_{2n}$,
Then the group algebra $\mathbb{C}\cG_{(2,d)}$ of the Weyl group of type $B_d$ acts faithfully as endomorphisms of  $V^{\otimes d}$ as long as $n\geq d$.  A quantized version of this special case appears in \cite{SaSt}, see also \cite{Wang}, \cite{ES1}. In \cite{ES1} in fact a quantized version of Theorem~\ref{thm95} below appears. (Note that in there it is shown that the action of the coideal subalgebra, denoted $\mathcal{H}$ in \cite{ES1}, on the tensor space $V^{\otimes d}$ agrees with the action of the quantum group attached to the Lie algebra of $\mathbf{G}$.)
}
\end{remark}

\subsection{Wreath product with an arbitrary abelian group}\label{s3.8}

Let $A$ be an abelian group with $\ell$ elements. Then we have an isomorphism 
$\mathbb{C}[A]\cong \mathbb{C}[\mathbf{C}_\ell]$. Choose some $\mathbf{k}=(k_1,k_2,\dots,k_\ell)$ 
such that all $k_i>d$. Then the endomorphism algebra of $V$ from Subsection~\ref{s3.2} 
is isomorphic to both $\mathbb{C}[A]$ and $\mathbb{C}[\mathbf{C}_\ell]$. Going to the other 
side of the Schur-Weyl duality in Theorem~\ref{thm12} and taking Lemma~\ref{lem17} into 
account, we obtain $\mathbb{C}[A\wr S_d]\cong\mathbb{C}[\mathbf{C}_\ell\wr S_d]$. 
Therefore all results of this paper can be reformulated 
(for an appropriately defined action of $A$) for the wreath product $A\wr S_d$.

\section{Extension to $G(\ell,k,d)$}\label{s4}

\subsection{Complex reflection groups $G(\ell,k,d)$}\label{s4.1}
For the rest of the article fix  $k\in\mathbb{Z}_{>0}$ such that $k|\ell$.  
The realization of $S(\ell,d)$ as the group of $d\times d$ matrices as described 
in Subsection~\ref{s2.1} contains an index $k$  subgroup given by all matrices in $S(\ell,d)$ with determinant 
in $\mathbf{C}_{l/k}$, It is the complex reflection group $G(\ell,k,d)$. In this section we generalize most of the above results to the case of $G(\ell,k,d)$. Our approach is motivated by \cite{HS}.

\subsection{The quotient groupoid $\cG_{(\ell,d)}^k$}\label{s4.2}

Let $\theta:\underline{\ell}\to\underline{\ell}$ denote the permutation sending $i$ to $i+1$ 
for $i<\ell$ and $\ell$ to $1$. Then $\theta$ is a cycle of order $\ell$. Consider the permutation $\theta_k:=\theta^{\ell/k}$ and let $H_k$ be the group generated by $\theta_k$, in particular $|H_k|=k$, the order of $\theta_k$. 

The group $H_k$ acts by automorphisms on $\cG_{(\ell,d)}$ permuting the colors. 
More explicitly, this action is described as follow:
\begin{itemize}
\item For $f\in \cG_{(\ell,d)}$ with $f=(f(1),f(2),\dots,f(d))$, we have 
\begin{displaymath}
\theta_k(f)=(\theta_k(f(1)),\theta_k(f(2)),\dots,\theta_k(f(d))). 
\end{displaymath}
\item For $f,g\in \cG_{(\ell,d)}$ and $\sigma\in\cG_{(\ell,d)}(f,g)$ we have $\theta_k(\sigma)=\sigma$.
\end{itemize}
Note that the action of $H_k$ on $\underline{\ell}$ is {\em free} in the sense that the 
stabilizer of each element is trivial. This implies 
that the action of $H_k$ on $\cG_{(\ell,d)}$ is free as well.

Denote by $\cG_{(\ell,d)}^k$ the {\em quotient groupoid} $\cG_{(\ell,d)}/H_k$ defined as follows:
\begin{itemize}
\item Objects of $\cG_{(\ell,d)}^k$ are orbits of the action of $H_k$ on objects of $\cG_{(\ell,d)}$.
For $f\in \cG_{(\ell,d)}$, we will denote the $H_k$-orbit of $f$ by $f^{(k)}$.
\item For $f,g\in \cG_{(\ell,d)}$, elements in the set $\cG_{(\ell,d)}^k(f^{(k)},g^{(k)})$ are 
orbits of $H_k$ on the $H_k$-invariant set
\begin{displaymath}
\coprod_{f'\in f^{(k)},g'\in g^{(k)}} \cG_{(\ell,d)}(f',g').
\end{displaymath}
For $\sigma\in \cG_{(\ell,d)}(f,g)$, we will denote the $H_k$-orbit of $\sigma$ by $\sigma^{(k)}$.
\item For $f\in \cG_{(\ell,d)}$, the identity morphism in $\cG_{(\ell,d)}^k(f^{(k)},f^{(k)})$ is  $e_f^{(k)}$.
\item The composition in $\cG_{(\ell,d)}^k$ is the induced composition from $\cG_{(\ell,d)}$.
\end{itemize}

\begin{example}\label{ex49}
{\rm
Let $d=\ell=2$ (the setup of Example~\ref{ex1}) and take $k=2$. Then $\theta=\theta_k$ 
swaps the two colors. The quotient groupoid $\cG_{(\ell,d)}^k$ has two objects:
\begin{displaymath}
o_1=
\{\begin{picture}(30,5)
\put(05,02.50){\makebox(0,0)[cc]{\color{red}{$\circ$}}}
\put(25,02.50){\makebox(0,0)[cc]{\color{red}{$\circ$}}}
\end{picture}\qquad\qquad
\begin{picture}(30,5)
\put(05,02.50){\makebox(0,0)[cc]{\color{blue}{$\bullet$}}}
\put(25,02.50){\makebox(0,0)[cc]{\color{blue}{$\bullet$}}}
\end{picture}\}\qquad\text{ and }\qquad
o_2=
\{
\begin{picture}(30,5)
\put(05,02.50){\makebox(0,0)[cc]{\color{red}{$\circ$}}}
\put(25,02.50){\makebox(0,0)[cc]{\color{blue}{$\bullet$}}}
\end{picture}\qquad\qquad
\begin{picture}(30,5)
\put(05,02.50){\makebox(0,0)[cc]{\color{blue}{$\bullet$}}}
\put(25,02.50){\makebox(0,0)[cc]{\color{red}{$\circ$}}}
\end{picture}\}
\end{displaymath}
There are no morphisms between these objects. Both endomorphism sets have two elements. For the object $o_1$ we have
endomorphisms
\begin{displaymath}
\left\{\begin{picture}(80,20)
\put(05,02.50){\makebox(0,0)[cc]{\color{red}{$\circ$}}}
\put(25,02.50){\makebox(0,0)[cc]{\color{red}{$\circ$}}}
\put(05,17.50){\makebox(0,0)[cc]{\color{red}{$\circ$}}}
\put(25,17.50){\makebox(0,0)[cc]{\color{red}{$\circ$}}}
\put(55,02.50){\makebox(0,0)[cc]{\color{blue}{$\bullet$}}}
\put(75,02.50){\makebox(0,0)[cc]{\color{blue}{$\bullet$}}}
\put(55,17.50){\makebox(0,0)[cc]{\color{blue}{$\bullet$}}}
\put(75,17.50){\makebox(0,0)[cc]{\color{blue}{$\bullet$}}}
\color{red}{\dottedline{1}(05.00,02.50)(05.00,17.50)
\dottedline{1}(25.00,02.50)(25.00,17.50)}
\color{blue}{\drawline(51.50,02.50)(51.50,17.50)
\drawline(71.50,02.50)(71.50,17.50)}
\end{picture}\right\}
\qquad\qquad\text{ and }\qquad\qquad
\left\{\begin{picture}(80,20)
\put(05,02.50){\makebox(0,0)[cc]{\color{red}{$\circ$}}}
\put(25,02.50){\makebox(0,0)[cc]{\color{red}{$\circ$}}}
\put(05,17.50){\makebox(0,0)[cc]{\color{red}{$\circ$}}}
\put(25,17.50){\makebox(0,0)[cc]{\color{red}{$\circ$}}}
\put(55,02.50){\makebox(0,0)[cc]{\color{blue}{$\bullet$}}}
\put(75,02.50){\makebox(0,0)[cc]{\color{blue}{$\bullet$}}}
\put(55,17.50){\makebox(0,0)[cc]{\color{blue}{$\bullet$}}}
\put(75,17.50){\makebox(0,0)[cc]{\color{blue}{$\bullet$}}}
\color{red}{\dottedline{1}(05.00,02.50)(25.00,17.50)
\dottedline{1}(25.00,02.50)(05.00,17.50)}
\color{blue}{\drawline(51.50,02.50)(71.50,17.50)
\drawline(71.50,02.50)(51.50,17.50)}
\end{picture}\right\}
\end{displaymath}
which form a group isomorphic to $S_2$. For the object $o_2$ we have endomorphisms
\begin{displaymath}
\left\{\begin{picture}(80,20)
\put(05,02.50){\makebox(0,0)[cc]{\color{red}{$\circ$}}}
\put(75,02.50){\makebox(0,0)[cc]{\color{red}{$\circ$}}}
\put(05,17.50){\makebox(0,0)[cc]{\color{red}{$\circ$}}}
\put(75,17.50){\makebox(0,0)[cc]{\color{red}{$\circ$}}}
\put(55,02.50){\makebox(0,0)[cc]{\color{blue}{$\bullet$}}}
\put(25,02.50){\makebox(0,0)[cc]{\color{blue}{$\bullet$}}}
\put(55,17.50){\makebox(0,0)[cc]{\color{blue}{$\bullet$}}}
\put(25,17.50){\makebox(0,0)[cc]{\color{blue}{$\bullet$}}}
\color{red}{\dottedline{1}(05.00,02.50)(05.00,17.50)
\dottedline{1}(75.00,02.50)(75.00,17.50)}
\color{blue}{\drawline(51.50,02.50)(51.50,17.50)
\drawline(22.00,02.50)(22.00,17.50)}
\end{picture}\right\}
\qquad\qquad\text{ and }\qquad\qquad
\left\{\begin{picture}(80,20)
\put(55,02.50){\makebox(0,0)[cc]{\color{red}{$\circ$}}}
\put(25,02.50){\makebox(0,0)[cc]{\color{red}{$\circ$}}}
\put(05,17.50){\makebox(0,0)[cc]{\color{red}{$\circ$}}}
\put(75,17.50){\makebox(0,0)[cc]{\color{red}{$\circ$}}}
\put(05,02.50){\makebox(0,0)[cc]{\color{blue}{$\bullet$}}}
\put(75,02.50){\makebox(0,0)[cc]{\color{blue}{$\bullet$}}}
\put(55,17.50){\makebox(0,0)[cc]{\color{blue}{$\bullet$}}}
\put(25,17.50){\makebox(0,0)[cc]{\color{blue}{$\bullet$}}}
\color{red}{\dottedline{1}(55.00,02.50)(75.00,17.50)
\dottedline{1}(25.00,02.50)(05.00,17.50)}
\color{blue}{\drawline(02.50,02.50)(22.50,17.50)
\drawline(71.50,02.50)(51.50,17.50)}
\end{picture}\right\}
\end{displaymath}
which form a group isomorphic to $H_2=\langle\theta\rangle$ 
(which is also isomorphic to $S_2$ by a coincidence).
}
\end{example}

We denote by $\mathbb{C}\cG_{(\ell,d)}^k$ the linearization of $\cG_{(\ell,d)}^k$ constructed 
similarly to the construction of $\mathbb{C}\cG_{(\ell,d)}$ from $\cG_{(\ell,d)}$ in Subsection~\ref{s2.3}.

\subsection{Endomorphisms in $\cG_{(\ell,d)}^k$}\label{s4.3}

The group $H_k$ acts naturally on the set $\Lambda(\ell,d)$ of all types 
by permuting the indices (i.e. colors) of a type. We consider the set 
$\Lambda(\ell,d)/H_k$ of the corresponding orbits. For 
$\boldsymbol{\lambda}\in \Lambda(\ell,d)$, we denote by 
$H_k^{\boldsymbol{\lambda}}$ the stabilizer of $\boldsymbol{\lambda}$ in $H_k$ 
and by $\boldsymbol{\lambda}^{(k)}$ the orbit of $\boldsymbol{\lambda}$ with respect 
to the action of $H_k$. Thus, in Example~\ref{ex49}, the stabilizer of the type $(2,0)$ 
is trivial and the orbit of this type contains one more element, namely $(0,2)$; while 
the stabilizer of the type $(1,1)$  coincides with $H_2$ and the orbit of $(1,1)$ 
contains no other elements.

As connected components of $\cG_{(\ell,d)}$ are in bijection with $\Lambda(\ell,d)$, 
connected components of $\cG_{(\ell,d)}^k$ are in bijection with $\Lambda(\ell,d)/H_k$. 
To determine simple $\mathbb{C}\cG_{(\ell,d)}^k$-modules, we need to determine the endomorphism 
groups of objects in $\cG_{(\ell,d)}^k$.

Let $\boldsymbol{\lambda}\in \Lambda(\ell,d)$ and $f\in \cG_{(\ell,d)}$ be of type 
$\boldsymbol{\lambda}$. The assignment $\sigma\to \sigma^{(k)}$ defines a homomorphism from 
$\cG_{(\ell,d)}(f,f)$ to $\cG_{(\ell,d)}^k(f^{(k)},f^{(k)})$ which is injective, since the action of $H_k^{\boldsymbol{\lambda}}$ is free. Hence, we can view $\cG_{(\ell,d)}(f,f)$ as a subgroup of $\cG_{(\ell,d)}^k(f^{(k)},f^{(k)})$.

For a generator $h\in H_k^{\boldsymbol{\lambda}}$ consider the cyclic subgroup of 
$\cG_{(\ell,d)}^k(f^{(k)},f^{(k)})$ generated by $\sigma^{(k)}_{(f,h(f))}$. Since 
the action of $H_k^{\boldsymbol{\lambda}}$ is free, all orbits of $h$ have
order $|H_k^{\boldsymbol{\lambda}}|$. It follows that the order of $\sigma^{(k)}_{(f,h(f))}$
equals $|H_k^{\boldsymbol{\lambda}}|$ and hence  $H_k^{\boldsymbol{\lambda}}$ can be identified with the subgroup generated by 
$\sigma^{(k)}_{(f,h(f))}$ by 
mapping $h\in  H_k^{\boldsymbol{\lambda}}$ to $\sigma^{(k)}_{(f,h(f))}$.

\begin{lemma}\label{lem51}
Let $\boldsymbol{\lambda}\in \Lambda(\ell,d)$ and $f\in \cG_{(\ell,d)}$ be of 
type $\boldsymbol{\lambda}$.  Then 
\begin{equation}\label{eq9}
\cG_{(\ell,d)}^k(f^{(k)},f^{(k)})\cong H_k^{\boldsymbol{\lambda}}\ltimes \cG_{(\ell,d)}(f,f).
\end{equation}
\end{lemma}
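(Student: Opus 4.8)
The plan is to produce the semidirect product structure by exhibiting $\cG_{(\ell,d)}^k(f^{(k)},f^{(k)})$ as a set-theoretic product of the two subgroups already identified in the text, namely the copy of $\cG_{(\ell,d)}(f,f)$ coming from $\sigma\mapsto\sigma^{(k)}$ and the copy of $H_k^{\boldsymbol{\lambda}}$ generated by $\sigma^{(k)}_{(f,h(f))}$ for a fixed generator $h$. First I would unwind the definition of the quotient groupoid: a morphism in $\cG_{(\ell,d)}^k(f^{(k)},f^{(k)})$ is an $H_k$-orbit of a bijection $\tau\in\cG_{(\ell,d)}(f',g')$ with $f',g'\in f^{(k)}$, and since the $H_k$-action on objects is free, each such orbit has a unique representative whose source is exactly $f$; write it as $\tau\in\cG_{(\ell,d)}(f,h'(f))$ for a unique $h'\in H_k^{\boldsymbol{\lambda}}$ (the target must lie in the same component of $\cG_{(\ell,d)}$ as $f$, hence in the $H_k^{\boldsymbol{\lambda}}$-orbit of $f$). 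Thus every endomorphism of $f^{(k)}$ is represented by a bijection $f\to h'(f)$, and by Proposition~\ref{prop3} the number of such bijections is $\boldsymbol{\lambda}!$ for each $h'$, giving $|\cG_{(\ell,d)}^k(f^{(k)},f^{(k)})| = |H_k^{\boldsymbol{\lambda}}|\cdot\boldsymbol{\lambda}! = |H_k^{\boldsymbol{\lambda}}|\cdot|\cG_{(\ell,d)}(f,f)|$, which is the right cardinality.

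Next I would check that the two subgroups intersect trivially and that $\cG_{(\ell,d)}(f,f)$ is normal. Triviality of the intersection is immediate from the source/target bookkeeping: an element of $\cG_{(\ell,d)}(f,f)$ is represented by a bijection $f\to f$, so it corresponds to $h'=e$, whereas the subgroup generated by $\sigma^{(k)}_{(f,h(f))}$ meets $\cG_{(\ell,d)}(f,f)$ only in the identity because $h$ has order $|H_k^{\boldsymbol{\lambda}}|$ and only its trivial power fixes $f$. Combined with the cardinality count, this shows $\cG_{(\ell,d)}^k(f^{(k)},f^{(k)})$ is the product of these two subgroups. For normality, I would compute the conjugate $\sigma^{(k)}_{(f,h(f))}\cdot\pi^{(k)}\cdot(\sigma^{(k)}_{(f,h(f))})^{-1}$ for $\pi\in\cG_{(\ell,d)}(f,f)$: working in $\cG_{(\ell,d)}$ one lifts to an honest composition $\sigma_{(f,h(f))}\,\pi\,\sigma_{(f,h(f))}^{-1}$, which is again a bijection $f\to f$ (since $\pi$ is applied after conjugating by an isomorphism), hence lies in $\cG_{(\ell,d)}(f,f)$. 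The resulting conjugation action of $h$ on $\cG_{(\ell,d)}(f,f)\cong S_{\boldsymbol{\lambda}}$ is exactly the permutation of the factors $S_{\lambda_i}$ induced by the way $\theta_k$ permutes the colors fixing $\boldsymbol{\lambda}$, which is precisely the semidirect action appearing on the right-hand side of \eqref{eq9}.

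Finally I would assemble these pieces: a subgroup that is the product of a normal subgroup $N=\cG_{(\ell,d)}(f,f)$ and a subgroup $H\cong H_k^{\boldsymbol{\lambda}}$ with $N\cap H=\{e\}$ is, by the standard recognition criterion for semidirect products, isomorphic to $H\ltimes N$ with the conjugation action just described, giving \eqref{eq9}. The main obstacle I anticipate is purely bookkeeping: making the identification of orbit-morphisms with canonical representatives $f\to h'(f)$ fully precise, and verifying that composition of orbits matches composition of these representatives (including the cocycle-type twist when the target of one representative is $h'(f)$ rather than $f$) — this requires care but no deep idea, since the freeness of the $H_k$-action on $\cG_{(\ell,d)}$ (stated in Subsection~\ref{s4.2}) guarantees the representatives are unique and the composition is unambiguous.
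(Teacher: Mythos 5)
Your proposal is correct and follows essentially the same route as the paper: exhibit $H_k^{\boldsymbol{\lambda}}$ and $\cG_{(\ell,d)}(f,f)$ as subgroups with trivial intersection, observe normality of $\cG_{(\ell,d)}(f,f)$, and match cardinalities by using freeness of the $H_k$-action to normalize each orbit-morphism to a unique representative with source $f$ (target in $H_k^{\boldsymbol{\lambda}}f$), giving $|H_k^{\boldsymbol{\lambda}}|\cdot\boldsymbol{\lambda}!$. The only difference is that you additionally spell out the conjugation action of $H_k^{\boldsymbol{\lambda}}$ on $S_{\boldsymbol{\lambda}}$ as permutation of the factors, which the paper leaves implicit.
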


\begin{proof}
That both $H_k^{\boldsymbol{\lambda}}$ and $\cG_{(\ell,d)}(f,f)$ are subgroups 
of $\cG_{(\ell,d)}^k(f^{(k)},f^{(k)})$ with trivial intersection follows from the construction. It is easy to check that $\cG_{(\ell,d)}(f,f)$ is normal. It remains to show 
that both sides of \eqref{eq9} have the same cardinality. Clearly,
\begin{eqnarray}\label{eq91}
|H_k^{\boldsymbol{\lambda}}\ltimes \cG_{(\ell,d)}(f,f)|
&=&|H_k^{\boldsymbol{\lambda}}|\cdot |\cG_{(\ell,d)}(f,f)|. 
\end{eqnarray}
From the definition $\cG_{(\ell,d)}^k$ we have that elements in $\cG_{(\ell,d)}^k(f^{(k)},f^{(k)})$ 
are  identified with $H_k^{\boldsymbol{\lambda}}$-orbits on the set 
\begin{displaymath}
\coprod_{g',g\in H_k^{\boldsymbol{\lambda}}f} \cG_{(\ell,d)}(g',g).
\end{displaymath}
Using $H_k^{\boldsymbol{\lambda}}$, we can move $g'$ to $f$, which means that 
elements in $\cG_{(\ell,d)}^k(f^{(k)},f^{(k)})$ can be identified with elements in
\begin{displaymath}
\coprod_{g\in H_k^{\boldsymbol{\lambda}}f} \cG_{(\ell,d)}(f,g)
\end{displaymath}
and this set has cardinality exactly given by \eqref{eq91}. The claim follows.
\end{proof}

\subsection{Connection to $G(\ell,k,d)$}\label{s4.5}

The vector space
\begin{displaymath}
A_{( \ell,k,d) }:=\bigoplus_{f^{(k)},g^{(k)}\in \cG_{(\ell,d)}^k} \mathbb{C}\cG_{(\ell,d)}^k(f^{(k)},g^{(k)})
\end{displaymath}
inherits from $\mathbb{C}\cG_{(\ell,d)}^{k}$ the structure of a finite dimensional associative 
algebra over $\mathbb{C}$ and there is a canonical equivalence of categories 
\begin{equation}\label{eqnk1}
\mathbb{C}\cG_{(\ell,d)}^{k}\text{-}\mathrm{mod}\cong A_{( \ell,k,d) }\text{-}\mathrm{mod}.
\end{equation} 

\begin{lemma}\label{lem61}
There is a unique homomorphism $\Psi:A_{( \ell,k,d) }\to A_{(\ell,d)}$ of algebras  such that
for $f\in\cG_{(\ell,d)}$ we have
\begin{displaymath}
\Psi(f^{(k)})=\sum_{g\in f^{(k)}}g.
\end{displaymath}
Moreover, this homomorphism is injective.
\end{lemma}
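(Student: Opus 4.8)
The plan is to define $\Psi$ on a basis, check multiplicativity, and then read off injectivity. Recall that the standard basis of $A_{(\ell,d)}$ consists of the morphisms $\tau$ of $\cG_{(\ell,d)}$, each recorded together with its source and target, and that $H_k$ acts on this basis by $h\colon\sigma_{(f,g)}\mapsto\sigma_{(h(f),h(g))}$; since the $H_k$-action on $\cG_{(\ell,d)}$ is free, every $H_k$-orbit of morphisms has exactly $k$ elements. The standard basis of $A_{(\ell,k,d)}$ is precisely the set of these orbits $\sigma^{(k)}$, so there is a unique linear map $\Psi$ with
\begin{displaymath}
\Psi(\sigma^{(k)}):=\sum_{\tau\in\sigma^{(k)}}\tau\in A_{(\ell,d)};
\end{displaymath}
restricting to identity morphisms $\sigma=e_f$ recovers the stated formula. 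Its image is the subalgebra of $H_k$-invariant elements of $A_{(\ell,d)}$, of dimension $\ell^d d!/k$.

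Next I would verify that $\Psi$ is a homomorphism. Unitality is immediate: the unit of $A_{(\ell,k,d)}$ is the sum of all object-identities $e_{f^{(k)}}$, and $\Psi$ sends it to $\sum_{f^{(k)}}\sum_{g\in f^{(k)}}e_g$, i.e.\ the sum of all $e_g$, which is the unit of $A_{(\ell,d)}$. For multiplicativity, take $\sigma^{(k)}\in\cG_{(\ell,d)}^k(a^{(k)},b^{(k)})$ and $\tau^{(k)}\in\cG_{(\ell,d)}^k(c^{(k)},d^{(k)})$ and expand $\Psi(\tau^{(k)})\,\Psi(\sigma^{(k)})=\sum_{h,h'\in H_k}h'(\tau)\,h(\sigma)$ in $A_{(\ell,d)}$. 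A summand $h'(\tau)\,h(\sigma)$ is nonzero only if the target $h(b)$ of $h(\sigma)$ equals the source $h'(c)$ of $h'(\tau)$. If $b^{(k)}\neq c^{(k)}$ this never occurs and both sides equal $0=\Psi(\tau^{(k)}\sigma^{(k)})$. If $b^{(k)}=c^{(k)}$, then, writing $h_0$ for the unique element of $H_k$ with $h_0(c)=b$ (unique because the action on objects is free), for each $h$ the only surviving $h'$ is $hh_0$, and, since each $h$ acts as a groupoid automorphism, the corresponding summand is $h\bigl(h_0(\tau)\,\sigma\bigr)$. As $h_0(\tau)\,\sigma$ is a representative of $\tau^{(k)}\sigma^{(k)}$, summing over $h\in H_k$ yields exactly $\Psi(\tau^{(k)}\sigma^{(k)})$. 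This bookkeeping — that the cross terms of a product of two orbit sums reassemble into a single orbit sum, with no spurious terms and all coefficients equal to $1$ — is the only slightly delicate point, and it is precisely where freeness of the $H_k$-action is used.

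Finally, injectivity is essentially built into the definition. Distinct basis elements $\sigma^{(k)}\neq\rho^{(k)}$ of $A_{(\ell,k,d)}$ are disjoint $H_k$-orbits of morphisms of $\cG_{(\ell,d)}$, so $\Psi(\sigma^{(k)})$ and $\Psi(\rho^{(k)})$ are sums over disjoint subsets of the basis of $A_{(\ell,d)}$, each coefficient being $1$; hence the family $\{\Psi(\sigma^{(k)})\}$ is linearly independent and $\Psi$ is injective. (Equivalently, $\Psi$ identifies $A_{(\ell,k,d)}$ with the $H_k$-invariant subalgebra of $A_{(\ell,d)}$, which is the natural expectation for a free quotient groupoid and also matches the dimension count.) I expect no real obstacle beyond the multiplicativity computation above.
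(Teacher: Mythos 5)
Your argument is correct and follows essentially the same route as the paper: define $\Psi$ on the basis of $H_k$-orbits of morphisms (whence uniqueness), observe that distinct orbits give sums over disjoint subsets of the morphism basis of $A_{(\ell,d)}$ (whence injectivity), and check compatibility with composition. The only difference is that you spell out the multiplicativity bookkeeping via freeness of the $H_k$-action, which the paper dismisses as following directly from the definitions; your interpretation of the defining formula as holding for all morphism orbits (not just identities) is indeed the intended one.
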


\begin{proof}
Uniqueness is clear as $\{f^{(k)}\mid  f\in\cG_{(\ell,d)}\}$ forms a basis of $A_{( \ell,k,d) }$.
Injectivity is clear as $\{f\mid  f\in\cG_{(\ell,d)}\}$ forms a basis of $A_{(\ell,d)}$ and thus 
$\Psi$ sends linear independent elements to linear independent elements. To check that 
$\Psi$ extends to a homomorphism, it is enough to  check that $\Psi$  is compatible 
with composition in $\cG_{(\ell,d)}^k$. This follows directly from the definitions.
\end{proof}

Our main observation in this section is the following.

\begin{theorem}\label{thm55}
The image of $\Phi^{-1}\Psi$ coincides with the subalgebra $\mathbb{C}[G( \ell,k,d) ]$ in
$\mathbb{C}[S(\ell,d)]$. In particular, the algebras $A_{( \ell,k,d) }$ and 
$\mathbb{C}[G( \ell,k,d) ]$ are isomorphic.
\end{theorem}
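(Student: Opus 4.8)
The plan is to exhibit an explicit isomorphism between $A_{(\ell,k,d)}$ and the group subalgebra $\mathbb{C}[G(\ell,k,d)]$ of $\mathbb{C}[S(\ell,d)]$ by combining the two algebra maps already at hand: the injective homomorphism $\Psi \colon A_{(\ell,k,d)} \hookrightarrow A_{(\ell,d)}$ of Lemma~\ref{lem61} and the isomorphism $\Phi \colon \mathbb{C}[S(\ell,d)] \xrightarrow{\sim} A_{(\ell,d)}$ of Theorem~\ref{thm7}. Thus $\Phi^{-1}\Psi \colon A_{(\ell,k,d)} \to \mathbb{C}[S(\ell,d)]$ is an injective algebra homomorphism, and the whole theorem reduces to identifying its image. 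First I would note that a dimension count already makes the final ``in particular'' clause follow from the image identification: $|G(\ell,k,d)| = \ell^d d!/k$ since $G(\ell,k,d)$ has index $k$ in $S(\ell,d)$, so it suffices to check that $\dim A_{(\ell,k,d)} = \ell^d d!/k$, which is immediate from Proposition~\ref{prop3}\eqref{prop3.3} together with the fact that the $H_k$-action on all of $\coprod \cG_{(\ell,d)}(f,g)$ is free (so the number of $H_k$-orbits of morphisms is exactly $\ell^d d!/k$). Hence $\Phi^{-1}\Psi$ is an injection between spaces of equal dimension, and once we show its image lands in $\mathbb{C}[G(\ell,k,d)]$ it must be onto, giving the isomorphism.

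Next I would pin down the image of $\Phi^{-1}\Psi$ on a spanning set. The algebra $A_{(\ell,k,d)}$ is spanned by the orbit-classes $\sigma^{(k)}_{(f,g)}$ of morphisms in $\cG_{(\ell,d)}$; pushing forward, $\Psi(\sigma^{(k)}) = \sum_{h \in H_k} \theta_k^{\,h}(\sigma)$, where the sum ranges over the $H_k$-orbit and $\theta_k$ permutes colors but fixes the underlying permutation. So I need to compute $\Phi^{-1}$ of elements of the shape $\sum_{j=0}^{k-1} \big(\text{something involving } \theta_k^j\big)$. The key computation is that, under $\Phi$, conjugating by the color-shift corresponds to multiplication by a power of $s_0^{(1)} s_0^{(2)} \cdots s_0^{(d)}$ — the diagonal matrix $z := \operatorname{diag}(\xi_\ell,\dots,\xi_\ell)$, which is central in $S(\ell,d)$ and generates the ``determinant'' character. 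Concretely, using \eqref{eqn1} one checks $\Phi(z^{\ell/k}) = \sum_f \xi_\ell^{(\ell/k)(f(1)+\cdots+f(d))} e_f$, and this scalar detects exactly which $H_k$-orbit $f^{(k)}$ lies in. From this I would read off that $\Phi^{-1}\big(\sum_f e_f\big) = e$, that $\Phi^{-1}$ of the image of a single $f^{(k)}$ is $\frac{1}{k}\sum_{j=0}^{k-1} (z^{\ell/k})^j \cdot (\text{an idempotent-like element})$ — more cleanly, that $\Phi^{-1}\Psi(A_{(\ell,k,d)})$ is precisely the subalgebra of $\mathbb{C}[S(\ell,d)]$ fixed by the automorphism ``conjugate by nothing but twist colors'', i.e. the fixed subalgebra under the automorphism of $\mathbb{C}[S(\ell,d)]$ that $\Phi$ intertwines with $\sigma \mapsto \theta_k(\sigma)$.

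The cleanest route, and the one I would actually write up, is to identify that automorphism group-theoretically. Let $\tau$ be the automorphism of $S(\ell,d)$ given by conjugation by the diagonal matrix $\operatorname{diag}(\mu, 1, \dots, 1)$ for a suitable scalar $\mu$ — or more to the point, the automorphism whose fixed points are exactly $G(\ell,k,d)$. Here the right statement is that $G(\ell,k,d) = \{g \in S(\ell,d) : \det(g) \in \mathbf{C}_{\ell/k}\}$ is the kernel of the surjective character $\chi \colon S(\ell,d) \to \mathbf{C}_k$, $g \mapsto \det(g)^{\text{mod}}$, and the averaging idempotent $\frac1k \sum_{j=0}^{k-1} (\text{generator of } \mathbf{C}_k\text{-grading})^j$ projects $\mathbb{C}[S(\ell,d)]$ onto $\mathbb{C}[G(\ell,k,d)]$. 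I would then match this $\mathbf{C}_k$-grading of $\mathbb{C}[S(\ell,d)]$ with the grading of $A_{(\ell,d)}$ by $H_k$-orbit-index under $\Phi$, using the generator computation above, and conclude that $\Psi(A_{(\ell,k,d)})$ equals the degree-zero part, which under $\Phi^{-1}$ is $\mathbb{C}[G(\ell,k,d)]$. The main obstacle is purely bookkeeping: correctly normalizing the correspondence between the color-shift $\theta_k$ on the groupoid side and the power of the central element $z$ (equivalently, the determinant character) on the group side, and checking that $\Psi$ of the orbit-sum $\sum_{g \in f^{(k)}} g$ really does span exactly the $H_k$-invariants rather than something smaller — this is where one must use freeness of the $H_k$-action (so no collapsing occurs) together with the dimension match to upgrade ``contained in'' to ``equal''. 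No genuinely hard step is expected; the content is the generator identity $\Phi^{-1}\Psi(f^{(k)}) \in \mathbb{C}[G(\ell,k,d)]$ plus the dimension count.
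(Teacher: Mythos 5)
Your overall strategy is essentially the paper's, just run in the opposite direction. Both arguments rest on the same three ingredients: (a) $\Psi(A_{(\ell,k,d)})$ is exactly the subspace $A_{(\ell,d)}^{H_k}$ of $H_k$-invariants (orbit sums of the morphism basis, Lemma~\ref{lem61}); (b) a computation with \eqref{eqn1} describing how the color shift $\theta_k$ interacts with $\Phi$ on the diagonal part; and (c) the dimension count $\dim A_{(\ell,k,d)}=\ell^d d!/k=|G(\ell,k,d)|$, which the paper extracts from \eqref{eq9} and you get from Proposition~\ref{prop3}\eqref{prop3.3} plus freeness of the $H_k$-action. The structural difference is that the paper proves the inclusion $\mathbb{C}[G(\ell,k,d)]\subseteq\operatorname{im}(\Phi^{-1}\Psi)$ (checking it on $S_d$ and on the diagonal subgroup separately) and then invokes the dimension count, whereas you aim at the reverse inclusion, or directly at equality, via the character/grading picture; if you carry that out in full, the dimension count becomes a mere consistency check.

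However, the concrete identifications in your middle paragraph are not correct and must be replaced. The transported map $\tau:=\Phi^{-1}\circ\theta_k\circ\Phi$ is neither multiplication by, nor conjugation by, a power of the central element $z=s_0^{(1)}\cdots s_0^{(d)}$: conjugation by a central element is the identity, multiplication by $z^{\ell/k}$ is not an algebra automorphism, and averaging such multiplications does not produce the orbit sums $\sum_{g\in f^{(k)}}e_g$; likewise the fixed-point set of conjugation by $\mathrm{diag}(\mu,1,\dots,1)$ is a centralizer of a diagonal matrix, not $G(\ell,k,d)$. Also the scalar $\xi_\ell^{(\ell/k)(f(1)+\cdots+f(d))}$ occurring in $\Phi(z^{\ell/k})$ does not detect the $H_k$-orbit of $f$: it is in general neither constant on orbits nor does it separate them (already $\ell=k=2$ gives counterexamples). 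The statement you actually need is the one-line generator computation: by \eqref{eqn1} and the definition of $\theta_k$, one has $\theta_k(\Phi(\sigma))=\Phi(\sigma)$ for $\sigma\in S_d$ and $\theta_k(\Phi(s_0^{(j)}))=\xi_\ell^{-\ell/k}\Phi(s_0^{(j)})$, so $\tau$ multiplies each group element $g\in S(\ell,d)$ by $\chi(g)^{-1}$, where $\chi$ is the order-$k$ linear character with $\chi(s_i)=1$ and $\chi(s_0)=\xi_\ell^{\ell/k}$, i.e.\ the $(\ell/k)$-th power of the product of the nonzero entries of $g$ (careful with your bookkeeping: it is the product of entries rather than literally the determinant that enters here), whose kernel is $G(\ell,k,d)$. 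Since the group elements form an eigenbasis for $\tau$, the $\tau$-fixed subalgebra of $\mathbb{C}[S(\ell,d)]$ is the span of $\ker\chi$, and combining this with (a) gives $\Phi^{-1}\Psi(A_{(\ell,k,d)})=\mathbb{C}[G(\ell,k,d)]$ outright. With that repair your plan is complete and is, if anything, slightly cleaner than the paper's argument, which only proves one containment and needs the dimension count to conclude.
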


\begin{proof}
As the dimensions of $A_{( \ell,k,d) }$ and $\mathbb{C}[G( \ell,k,d) ]$ agree by \eqref{eq9}, we only need to check that 
each element in $\mathbb{C}[G( \ell,k,d) ]$ belongs to the image of $\Phi^{-1}\Psi$. For 
elements in $S_d$ this follows directly from the definitions.  Therefore it remains to check 
the statement for elements inside the subgroup $K\subset S(\ell,d)$ which corresponds to all diagonal 
matrices. This subgroup has cardinality $\ell^d$ and is generated by $s_0^{(j)}$, where 
$j=1,2,\dots,d$. From \eqref{eqn1}, for and $a_j\in\mathbb{Z}$ we have
\begin{displaymath}
\Phi:\prod_{j=1}^d\big(s_0^{(j)}\big)^{a_j}\mapsto\sum_f \xi_{\ell}^{\sum_j a_jf(j)}e_f.
\end{displaymath}
The effect of the action of $\theta_k$ (which maps the color $s$ to $s+\frac{\ell}{k}$) 
on the latter is
\begin{displaymath}
\sum_f \xi_{\ell}^{\sum_j a_j(f(j)+\frac{\ell}{k})}e_f. 
\end{displaymath}
Note that the diagonal matrix with entries $\xi_{\ell}^{a_j}$, where $j=1,2,\dots,d$, belongs to 
$G( \ell,k,d) $ if and only if $k$ divides $\sum_j a_j$. In the latter case, 
$\ell$ divides $\frac{\ell}{k}\sum_j a_j$,
which  implies that $\Phi(x)$ is $H_k$-invariant for every $x\in K\cap G( \ell,k,d) $.
Note that the group $K\cap G( \ell,k,d) $ has order $\frac{\ell^d}{k}$. 

Let $D$ be the subalgebra of $A_{(\ell,d)}$ generated by all $e_f$, where $f\in \cG_{(\ell,d)}$. 
Then the action of $H_k$ preserves $D$ and is free on $D$. Therefore the algebra $D^{H_k}$  
of all $H_k$-invariant elements in $D$ has dimension $\frac{l^d}{k}$. Comparing this with 
the previous paragraph, we thus get $\Phi(\mathbb{C}[K])=D^{H_k}$. The statement of the 
theorem follows.
\end{proof}

\subsection{Simple $\cG_{(\ell,d)}^k$-modules versus simple $\cG_{(\ell,d)}$-modules}\label{s4.4}

Consider the set $\Lambda(\ell,d)/H_k$ and let $\Gamma\subset \Lambda(\ell,d)$ be a cross-section
of the $H_k$-orbits. Choose any $\boldsymbol{\lambda}\in \Gamma$ and $\mathbf{p}\in\mathbf{T}_{\boldsymbol{\lambda}}$.
Then for each $z\in H_k$ we have $z\cdot \boldsymbol{\lambda}\in \Lambda(\ell,d)$ and
$z\cdot \mathbf{p}\in\mathbf{T}_{z\cdot \boldsymbol{\lambda}}$. Consider the $\cG_{(\ell,d)}$-module
$\mathrm{Q}_{\mathbf{p}}$ defined as follows:
\begin{itemize}
\item 
$
\mathrm{Q}_{\mathbf{p}}(f):=
\begin{cases}
\mathscr{S}_{z\cdot \mathbf{p}},&\text{if $\boldsymbol{\lambda}_f=z\cdot \boldsymbol{\lambda}$ for some $z\in H_k$};\\
0,&\text{if $\boldsymbol{\lambda}_f\not\in \boldsymbol{\lambda}^{(k)}$}.
\end{cases}
$
\item For any $f,g$ such that  $\boldsymbol{\lambda}_f=\boldsymbol{\lambda}_g=z\cdot \boldsymbol{\lambda}$ 
for some $z\in H_k$,  any $\pi\in\cG_{(\ell,d)}(f,g)$ and any $v\in \mathscr{S}_{z\cdot \mathbf{p}}$, we set 
\begin{displaymath}
\mathrm{L}_{\mathbf{p}}(\pi)\cdot v:=\sigma_{g,z\cdot f_{\boldsymbol{\lambda}}}\pi
\sigma_{z\cdot f_{\boldsymbol{\lambda}},f}(v).
\end{displaymath}
\item Extend this action to the whole of $\mathbb{C}\cG_{(\ell,d)}$ by linearity.
\end{itemize}
Comparing this with the definition of $\mathrm{L}_{\theta_k^{i}(\mathbf{p})}$ in Subsection~\ref{s2.4}, we get an isomorphism of $\cG_{(\ell,d)}$-modules
\begin{displaymath}
\mathrm{Q}_{\mathbf{p}}\cong\bigoplus_{i=1}^{[H_k:H_k^{\boldsymbol{\lambda}}]} 
\mathrm{L}_{\theta_k^{i}(\mathbf{p})}. 
\end{displaymath}
The group $H_k$ acts on $\mathrm{Q}_{\mathbf{p}}$ by permuting colors and twisting the action of 
$\cG_{(\ell,d)}$ accordingly. This action of $H_k$ is free and induces a free action of $H_k$ on the 
corresponding $A_{(\ell,d)}$-module (denoted with the same symbol)
\begin{displaymath}
Q_{\mathbf{p}}:=\bigoplus_{f\in \cG_{(\ell,d)}}\mathrm{Q}_{\mathbf{p}}(f). 
\end{displaymath}
Then $H_k$ acts by $A_{( \ell,k,d) }$-automorphism on $Q_{\mathbf{p}}$ since $A_{( \ell,k,d) }$ 
is exactly the set of fixed points in $A_{(\ell,d)}$ with respect to the action of $H_k$.

For $m\in\underline{k}$, consider the $\xi_\ell^m$-eigenspace for $\theta_k$:
\begin{displaymath}
Q_{\mathbf{p}}^m:=\{v\in Q_{\mathbf{p}}\mid  \theta_k\cdot v=\xi_\ell^m v\}. 
\end{displaymath}
Since the action of $H_k$ commutes with the action of 
$A_{( \ell,k,d) }$, the space $Q_{\mathbf{p}}^m$ is an $A_{( \ell,k,d) }$-submodule of $Q_{\mathbf{p}}$.

Assume that $f$ and $g$ are of the same type. Then, thanks to Lemma~\ref{lem51},
every element $\sigma^{(k)}\in\cG_{(\ell,d)}^k(f^{(k)},g^{(k)})$ can be uniquely written
in the form 
\begin{equation}\label{eql1}
\sigma^{(k)}=(\sigma^{(k)}_{(f,h(f))})^{t}\cdot\pi^{(k)}
\end{equation}
for some $t\in\{1,2,\dots,|H_k^{\boldsymbol{\lambda}}|\}$ and $\pi\in\cG_{(\ell,d)}(f,f)$.

Translating from $A_{( \ell,k,d) }$ to $\cG_{(\ell,d)}^{(k)}$ via the equivalence \eqref{eqnk1}, 
we get that the $A_{( \ell,k,d) }$-module
$Q_{\mathbf{p}}^m$ corresponds to the $\cG_{(\ell,d)}^{(k)}$-module
$\mathrm{L}_{(\mathbf{p},m)}$ defined as follows:
\begin{itemize}
\item $\mathrm{L}_{(\mathbf{p},m)}(f^{(k)}):=
\begin{cases}
\mathscr{S}_{\mathbf{p}},&\text{if $f\in\boldsymbol{\lambda}^{(k)}$};\\
0,&\text{if $f\notin\boldsymbol{\lambda}^{(k)}.$}
\end{cases}$
\item $\mathrm{L}_{(\mathbf{p},m)}(f^{(k)}):=\mathscr{S}_{\mathbf{p}}$ if $f\in\boldsymbol{\lambda}^{(k)}$.
\item For any $f,g$ of type $\boldsymbol{\lambda}$, any $\sigma\in\cG_{(\ell,d)}(f,g)$
and any $v\in\mathscr{S}_{\mathbf{p}}$ write $\sigma^{(k)}$ in the form \eqref{eql1} and set
\begin{displaymath}
\mathrm{L}_{(\mathbf{p},z)}(\sigma^{(k)})\cdot v:=
\xi_\ell^{\frac{\ell}{|H_k^{\boldsymbol{\lambda}}|}\cdot t\cdot m}\cdot \pi(v).
\end{displaymath}
\item Extend the action to the whole of $\cG_{(\ell,d)}^{(k)}$ by linearity.
\end{itemize}

For $\boldsymbol{\lambda}\in\Gamma$ denote by 
$\mathbf{T}^{(k)}_{\boldsymbol{\lambda}}$ the set of all pairs $(\mathbf{p},m)$, where
$\mathbf{p}\vdash\boldsymbol{\lambda}$ is a multi-partition and $m\in \underline{|H_k^{\boldsymbol{\lambda}}|}$.
Our main result here is the following:

\begin{theorem}\label{thm77}
The set of all $\mathrm{L}_{(\mathbf{p},m)}$, where 
\begin{displaymath}
(\mathbf{p},m)\in\bigcup_{\boldsymbol{\lambda}\in\Gamma} \mathbf{T}_{\mathbf{p}}^{(k)},
\end{displaymath}
is a cross-section of isomorphism classes of simple
$\cG_{(\ell,d)}^{(k)}$-modules.
\end{theorem}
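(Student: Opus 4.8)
The plan is to descend the classification of simple $\mathbb{C}\cG_{(\ell,d)}$-modules of Proposition~\ref{prop9} to $\cG_{(\ell,d)}^{(k)}$, exploiting that by Lemma~\ref{lem61} and the argument proving Theorem~\ref{thm55} the algebra $A_{(\ell,k,d)}$ is the fixed-point subalgebra $A_{(\ell,d)}^{H_k}$ for a \emph{free} action of the cyclic group $H_k$ by algebra automorphisms. The point is that a free action together with a cyclic acting group is the most degenerate case of Clifford theory: the Schur multipliers of all inertia groups vanish, so there is no projective-representation bookkeeping and every simple module can be read off as an eigenspace. In parallel it helps to keep the group-theoretic picture in view: $\cG_{(\ell,d)}^{(k)} = \coprod_{\boldsymbol{\lambda}\in\Gamma}(\text{connected component of }\boldsymbol{\lambda})$, the algebra of a connected groupoid is Morita equivalent to the group algebra of any vertex group, and by Lemma~\ref{lem51} that vertex group is $H_k^{\boldsymbol{\lambda}}\ltimes S_{\boldsymbol{\lambda}}$; so the theorem amounts to describing $\operatorname{Irr}(H_k^{\boldsymbol{\lambda}}\ltimes S_{\boldsymbol{\lambda}})$ by the pairs in $\mathbf{T}^{(k)}_{\boldsymbol{\lambda}}$.

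First I would analyse the $H_k$-action on the simple $A_{(\ell,d)}$-modules: restricting $\mathrm{L}_{\mathbf{p}}$ along the automorphism $\theta_k$ is again simple and isomorphic to $\mathrm{L}_{\mathbf{q}}$, where $\mathbf{q}$ arises from $\mathbf{p}$ by cyclically permuting its components (and its type). Writing $I_{\mathbf{p}}\le H_k$ for the stabiliser of the isomorphism class of $\mathrm{L}_{\mathbf{p}}$ — a cyclic subgroup of $H_k^{\boldsymbol{\lambda}_{\mathbf{p}}}$ — the module $Q_{\mathbf{p}} = \bigoplus_{z\in H_k/I_{\mathbf{p}}} z^{*}\mathrm{L}_{\mathbf{p}}$ of Subsection~\ref{s4.4} is a multiplicity-free semisimple $A_{(\ell,d)}$-module carrying a commuting $H_k$-action, and after restricting scalars to $A_{(\ell,k,d)} = A_{(\ell,d)}^{H_k}$ the crucial point is that
$$\operatorname{End}_{A_{(\ell,k,d)}}(Q_{\mathbf{p}})\ \cong\ \mathbb{C}[I_{\mathbf{p}}].$$
Indeed $\operatorname{End}_{A_{(\ell,d)}}(Q_{\mathbf{p}})\cong\prod_{z\in H_k/I_{\mathbf{p}}}\mathbb{C}$ by multiplicity-freeness, the $H_k$-action permutes these orthogonal idempotents transitively, and since $H^{2}(I_{\mathbf{p}},\mathbb{C}^{\times})=0$ the a priori projective action of $I_{\mathbf{p}}$ on $\mathrm{L}_{\mathbf{p}}$ is honest, so the arising crossed product collapses to $\mathbb{C}[I_{\mathbf{p}}]$. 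As $\mathbb{C}[I_{\mathbf{p}}]$ is split semisimple commutative with $|I_{\mathbf{p}}|$ one-dimensional blocks, $Q_{\mathbf{p}}$ is a multiplicity-free sum of $|I_{\mathbf{p}}|$ pairwise non-isomorphic simple $A_{(\ell,k,d)}$-modules, namely the $\theta_k$-eigenspaces $Q_{\mathbf{p}}^{m}=\mathrm{L}_{(\mathbf{p},m)}$; this proves simplicity of every $\mathrm{L}_{(\mathbf{p},m)}$.

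It remains to prove irredundancy and completeness. Multi-partitions in distinct $H_k$-orbits give non-isomorphic modules: either their types already lie in distinct $H_k$-orbits, so the supporting connected components of $\cG_{(\ell,d)}^{(k)}$ are disjoint, or the orbit of $\mathbf{p}$ is recovered from $Q_{\mathbf{p}}^{m}$ by inducing up to an $A_{(\ell,d)}$-module and reading off composition factors; within a fixed orbit the $Q_{\mathbf{p}}^{m}$ are separated by the blocks of $\mathbb{C}[I_{\mathbf{p}}]$, i.e.\ by $m$. Hence the $\mathrm{L}_{(\mathbf{p},m)}$ are pairwise non-isomorphic once $\mathbf{p}$ runs over a transversal of the $H_k$-orbits of multi-partitions of types in $\Gamma$ and $m$ indexes $\widehat{I_{\mathbf{p}}}$ — which is the content of the family $\bigcup_{\boldsymbol{\lambda}\in\Gamma}\mathbf{T}^{(k)}_{\boldsymbol{\lambda}}$. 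For completeness I would either compare with $\operatorname{Irr}(H_k^{\boldsymbol{\lambda}}\ltimes S_{\boldsymbol{\lambda}})$ through the groupoid Morita equivalence — Clifford theory for the normal subgroup $S_{\boldsymbol{\lambda}}$, again with vanishing Schur multiplier of the cyclic quotient, gives exactly $\coprod_{[\mathbf{p}]}\{\mathscr{S}_{\mathbf{p}}\otimes\chi\mid\chi\in\widehat{I_{\mathbf{p}}}\}$, matching block by block — or, since $A_{(\ell,k,d)}\cong\mathbb{C}[G(\ell,k,d)]$ is semisimple by Theorem~\ref{thm55}, verify the dimension identity $\sum(\dim\mathrm{L}_{(\mathbf{p},m)})^{2}=|G(\ell,k,d)|$, which reduces via $\dim Q_{\mathbf{p}}=[H_k:I_{\mathbf{p}}]\cdot(d!/\boldsymbol{\lambda}!)\cdot\dim\mathscr{S}_{\mathbf{p}}$ and \eqref{eq5} to the corresponding identity for $S(\ell,d)$.

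The genuinely routine parts are these dimension computations and the Morita-equivalence remark. The main obstacle is the bookkeeping in the last step: one must keep carefully apart the stabiliser $I_{\mathbf{p}}$ of the \emph{multi-partition} and the stabiliser $H_k^{\boldsymbol{\lambda}}$ of its \emph{type}, check that $Q_{\mathbf{p}}$ and hence each $\mathrm{L}_{(\mathbf{p},m)}$ depends only on the $H_k$-orbit of $\mathbf{p}$, and verify that the normalisation of the scalar $\xi_{\ell}^{\,\ell t m/|H_k^{\boldsymbol{\lambda}}|}$ in the definition of $\mathrm{L}_{(\mathbf{p},m)}$ identifies these eigenspaces with the blocks of $\mathbb{C}[I_{\mathbf{p}}]$ in such a way that running $(\mathbf{p},m)$ over $\bigcup_{\boldsymbol{\lambda}\in\Gamma}\mathbf{T}^{(k)}_{\boldsymbol{\lambda}}$ hits each isomorphism class exactly once.
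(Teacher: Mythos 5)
Your overall strategy---viewing $A_{(\ell,k,d)}$ as the $H_k$-fixed subalgebra of $A_{(\ell,d)}$ and cutting restrictions of the simples $\mathrm{L}_{\mathbf{p}}$ into $\theta_k$-eigenspaces---is essentially the route the paper itself takes (completeness by restricting along Theorem~\ref{thm55} and Proposition~\ref{prop9}, irredundancy via the vertex groups of Lemma~\ref{lem51}), so the architecture is fine. The genuine gap is your central endomorphism computation. For $Q_{\mathbf{p}}=\bigoplus_{z\in H_k/I_{\mathbf{p}}}z^{*}\mathrm{L}_{\mathbf{p}}$ one has $\mathrm{Res}_{A_{(\ell,k,d)}}Q_{\mathbf{p}}\cong\bigoplus_{m=1}^{|I_{\mathbf{p}}|}S_m^{\oplus[H_k:I_{\mathbf{p}}]}$ with the $S_m$ pairwise non-isomorphic, hence $\mathrm{End}_{A_{(\ell,k,d)}}(Q_{\mathbf{p}})\cong M_{[H_k:I_{\mathbf{p}}]}\bigl(\mathbb{C}[I_{\mathbf{p}}]\bigr)$, not $\mathbb{C}[I_{\mathbf{p}}]$; the module whose endomorphism algebra is $\mathbb{C}[I_{\mathbf{p}}]$ is the restriction of the \emph{single} simple $\mathrm{L}_{\mathbf{p}}$. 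Consequently your deduction that $Q_{\mathbf{p}}$ is a multiplicity-free sum of pairwise non-isomorphic eigenspaces fails whenever $I_{\mathbf{p}}\subsetneq H_k$: all $k$ eigenspaces are nonzero and simple, but only $|I_{\mathbf{p}}|$ isomorphism classes occur among them, each with multiplicity $[H_k:I_{\mathbf{p}}]$. Concretely, take $\ell=k=2$, $d=4$ (so $G(2,2,4)$ is the Weyl group of type $D_4$), $\boldsymbol{\lambda}=(2,2)$ and $\mathbf{p}=((2),(1,1))$: then $H_k^{\boldsymbol{\lambda}}=H_2$ while $I_{\mathbf{p}}$ is trivial, $\mathrm{Res}\,\mathrm{L}_{\mathbf{p}}$ is already simple and isomorphic to $\mathrm{Res}\,\mathrm{L}_{\theta_k(\mathbf{p})}$, so $\mathrm{End}_{A_{(2,2,4)}}(Q_{\mathbf{p}})\cong M_2(\mathbb{C})$ and the two $\theta_k$-eigenspaces of $Q_{\mathbf{p}}$ are isomorphic to each other.

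This also undercuts your last step, where you assert that the parameter set you actually arrive at---a transversal of the $H_k$-orbits of multipartitions together with a character of the multipartition stabiliser $I_{\mathbf{p}}$---``is the content of'' $\bigcup_{\boldsymbol{\lambda}\in\Gamma}\mathbf{T}^{(k)}_{\boldsymbol{\lambda}}$. It is not: as defined in Subsection~\ref{s4.4}, $\mathbf{T}^{(k)}_{\boldsymbol{\lambda}}$ consists of \emph{all} $\mathbf{p}\vdash\boldsymbol{\lambda}$ paired with \emph{all} $m\in\underline{|H_k^{\boldsymbol{\lambda}}|}$, which is strictly larger as soon as some $\mathbf{p}$ of type $\boldsymbol{\lambda}$ has $I_{\mathbf{p}}\subsetneq H_k^{\boldsymbol{\lambda}}$. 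In the $D_4$ example the theorem's family has $5+3+8=16$ members, whereas $G(2,2,4)$ has $13$ simple modules; correspondingly the vertex group $H_2\ltimes(S_2\times S_2)\cong S_2\wr S_2$ has $5$, not $8$, irreducibles, and for a non-$H_k^{\boldsymbol{\lambda}}$-stable $\mathbf{p}$ the explicit scalar formula for $\mathrm{L}_{(\mathbf{p},m)}$ does not even define a representation of that vertex group, since conjugation by $\sigma^{(k)}_{(f,h(f))}$ moves the $S_{\boldsymbol{\lambda}}$-isotype of $\mathscr{S}_{\mathbf{p}}$. So what your Clifford-theoretic argument proves (once the endomorphism step is corrected to $\mathrm{End}_{A_{(\ell,k,d)}}(\mathrm{Res}\,\mathrm{L}_{\mathbf{p}})\cong\mathbb{C}[I_{\mathbf{p}}]$) is the classification by pairs consisting of an $H_k$-orbit of multipartitions and an $m\in\underline{|I_{\mathbf{p}}|}$; the identification with the index set as printed cannot simply be read off, and the proposal therefore does not establish the statement in the form given---you must either reindex or record precisely which of the listed $\mathrm{L}_{(\mathbf{p},m)}$ coincide. (The same caveat applies to the irredundancy step in the paper's own proof, which uses the $(\mathbf{p}\vdash\boldsymbol{\lambda},\,m\in\underline{|H_k^{\boldsymbol{\lambda}}|})$ bookkeeping via Lemma~\ref{lem51} and the reference to Serre; your insistence on distinguishing $I_{\mathbf{p}}$ from $H_k^{\boldsymbol{\lambda}}$ is exactly the needed refinement there as well.)
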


\begin{proof}
The fact that each $\mathrm{L}_{(\mathbf{p},m)}$ is simple is clear as $\mathrm{L}_{(\mathbf{p},m)}$
takes non-zero values on a connected component of the groupoid $\cG_{(\ell,d)}^{(k)}$, and each nonzero
$\mathrm{L}_{(\mathbf{p},m)}(f)$ is a simple module already over $\cG_{(\ell,d)}(f,f)$, which is a subgroup of
$\cG_{(\ell,d)}^{(k)}(f,f)$.

From Theorem~\ref{thm55} we know that simple $\cG_{(\ell,d)}^{(k)}$-modules correspond to simple
$A_{( \ell,k,d) }$-modules and the latter can be obtained by restricting simple $A_{(\ell,d)}$-modules. The latter are classified by Proposition~\ref{prop9}\eqref{prop9.2}.
This and our construction of $\mathrm{L}_{(\mathbf{p},m)}$ above implies that the set of all
$\mathrm{L}_{(\mathbf{p},m)}$ (for all $\mathbf{p}$ and $m$) contains 
all simple $\cG_{(\ell,d)}^{(k)}$-module (up to isomorphism).

Finally, we see directly from the definition we see that $\mathrm{L}_{(\mathbf{p},m)}\cong 
\mathrm{L}_{(\mathbf{p},m+|H_k^{\boldsymbol{\lambda}}|)}$. From Lemma~\ref{lem51} it follows that 
$\mathrm{L}_{(\mathbf{p},m)}(f)$ for $\mathbf{p}\vdash\boldsymbol{\lambda}_f$ and
$m\in\underline{|H_k^{\boldsymbol{\lambda}_f}|}$ is a correct indexing set for simple
$\cG_{(\ell,d)}^{(k)}(f^{(k)},f^{(k)})$-modules (see e.g \cite[Subsection~8.2]{Se}). This completes the proof.
\end{proof}

For a general exposition of representation theory of wreath products, see \cite{CSST}, \cite{JK}, \cite{Se}.

\begin{corollary}\label{cor78}
Let $\boldsymbol{\lambda}\in\Lambda(\ell,d)$ and $\mathbf{p}\in\mathbf{T}_{\boldsymbol{\lambda}}$. Then
\begin{displaymath}
\mathrm{Res}^{A_{(\ell,d)}}_{A_{( \ell,k,d) }}\mathrm{L}_{\mathbf{p}}\cong
\bigoplus_{m\in\underline{|H_k^{\boldsymbol{\lambda}}|}}\mathrm{L}_{(\mathbf{p},m)}.
\end{displaymath}
\end{corollary}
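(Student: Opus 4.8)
The plan is to compute the restriction of the auxiliary $A_{(\ell,d)}$-module $Q_{\mathbf{p}}$ from Subsection~\ref{s4.4} to $A_{(\ell,k,d)}$ in two different ways and then cancel a common multiplicity. Recall that, via $\Psi$ from Lemma~\ref{lem61} and Theorem~\ref{thm55}, the algebra $A_{(\ell,k,d)}$ is identified with the subalgebra $A_{(\ell,d)}^{H_k}$ of $H_k$-fixed points in $A_{(\ell,d)}$, that $H_k$ acts on $Q_{\mathbf{p}}$ by color-twisting, and that this action is free and commutes with the $A_{(\ell,k,d)}$-action. Since $A_{(\ell,k,d)}\cong\mathbb{C}[G(\ell,k,d)]$ is semisimple, all additive/Krull--Schmidt cancellations below are legitimate.

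First I would restrict $Q_{\mathbf{p}}$ ``from above''. As an $A_{(\ell,d)}$-module, $Q_{\mathbf{p}}\cong\bigoplus_{i=1}^{[H_k:H_k^{\boldsymbol{\lambda}}]}\mathrm{L}_{\theta_k^i(\mathbf{p})}$, hence $\mathrm{Res}^{A_{(\ell,d)}}_{A_{(\ell,k,d)}}Q_{\mathbf{p}}\cong\bigoplus_i\mathrm{Res}^{A_{(\ell,d)}}_{A_{(\ell,k,d)}}\mathrm{L}_{\theta_k^i(\mathbf{p})}$. The key observation is that $\theta_k$, as an algebra automorphism of $A_{(\ell,d)}$, acts as the identity on $A_{(\ell,d)}^{H_k}=A_{(\ell,k,d)}$; since twisting a simple $A_{(\ell,d)}$-module by $\theta_k$ permutes the cross-section of Proposition~\ref{prop9} according to the $H_k$-action on multi-partitions (so $\mathrm{L}_{\mathbf{p}}^{\theta_k}\cong\mathrm{L}_{\theta_k^{\pm1}(\mathbf{p})}$), every $\mathrm{L}_{\theta_k^i(\mathbf{p})}$ has the same restriction to $A_{(\ell,k,d)}$ as $\mathrm{L}_{\mathbf{p}}$. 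This gives
\[
\mathrm{Res}^{A_{(\ell,d)}}_{A_{(\ell,k,d)}}Q_{\mathbf{p}}\;\cong\;[H_k:H_k^{\boldsymbol{\lambda}}]\cdot\mathrm{Res}^{A_{(\ell,d)}}_{A_{(\ell,k,d)}}\mathrm{L}_{\mathbf{p}}.
\]

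Next I would restrict $Q_{\mathbf{p}}$ ``from the side'': decomposing into $\theta_k$-eigenspaces yields $Q_{\mathbf{p}}=\bigoplus_{m=1}^{k}Q_{\mathbf{p}}^m$, each $Q_{\mathbf{p}}^m$ an $A_{(\ell,k,d)}$-submodule which, by the very construction in Subsection~\ref{s4.4} and the equivalence \eqref{eqnk1}, is isomorphic to $\mathrm{L}_{(\mathbf{p},m)}$. Using the periodicity $\mathrm{L}_{(\mathbf{p},m)}\cong\mathrm{L}_{(\mathbf{p},m+|H_k^{\boldsymbol{\lambda}}|)}$ from the proof of Theorem~\ref{thm77}, these $k$ summands regroup into $[H_k:H_k^{\boldsymbol{\lambda}}]$ copies of $\bigoplus_{m\in\underline{|H_k^{\boldsymbol{\lambda}}|}}\mathrm{L}_{(\mathbf{p},m)}$. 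Comparing with the previous paragraph and cancelling the factor $[H_k:H_k^{\boldsymbol{\lambda}}]$ gives the asserted isomorphism. As a consistency check one notes that both sides have total dimension $\tfrac{d!}{\boldsymbol{\lambda}!}\dim\mathscr{S}_{\mathbf{p}}$, using \eqref{eq5} on the left and $\dim\mathrm{L}_{(\mathbf{p},m)}=\tfrac{1}{|H_k^{\boldsymbol{\lambda}}|}\dim\mathrm{L}_{\mathbf{p}}$ on the right.

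The main point requiring care is the second step of the ``from above'' computation, namely that the color-twist $\theta_k$ becomes invisible upon restriction to $A_{(\ell,k,d)}$; this is precisely where one uses $A_{(\ell,k,d)}=A_{(\ell,d)}^{H_k}$ from Theorem~\ref{thm55}. A minor additional point is that $Q_{\mathbf{p}}$ and $\mathrm{L}_{(\mathbf{p},m)}$ were set up for $\boldsymbol{\lambda}$ in the chosen cross-section $\Gamma$, so for a general $\boldsymbol{\lambda}\in\Lambda(\ell,d)$ one first passes to its $H_k$-representative. I note also an alternative, purely group-theoretic route: since $G(\ell,k,d)$ is normal of index $k$ in $S(\ell,d)$ with \emph{cyclic} quotient $\mathbf{C}_k$, Clifford theory already forces $\mathrm{Res}\,\mathrm{L}_{\mathbf{p}}$ to be a multiplicity-free sum of pairwise non-isomorphic simple $\mathbb{C}[G(\ell,k,d)]$-modules, one for each coset of the inertia group (whose index is exactly $|H_k^{\boldsymbol{\lambda}}|$ by the stabilizer computation underlying Lemma~\ref{lem51}); one then matches this list with the $\mathrm{L}_{(\mathbf{p},m)}$, $m\in\underline{|H_k^{\boldsymbol{\lambda}}|}$, by comparing their restrictions to $\cG_{(\ell,d)}(f,f)$ together with the eigenvalues of $\Phi(s_0^{(j)})$ from \eqref{eqn1}.
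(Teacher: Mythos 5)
Your argument is correct and follows essentially the same route as the paper, whose proof is the one-line remark that the claim ``follows directly from the construction of $\mathrm{L}_{(\mathbf{p},m)}$'': you simply make this explicit by decomposing $\mathrm{Res}^{A_{(\ell,d)}}_{A_{(\ell,k,d)}}Q_{\mathbf{p}}$ both via its $A_{(\ell,d)}$-summands $\mathrm{L}_{\theta_k^i(\mathbf{p})}$ (whose restrictions agree since the color twist is trivial on the fixed-point subalgebra $A_{(\ell,k,d)}$) and via the $\theta_k$-eigenspaces $Q_{\mathbf{p}}^m\cong\mathrm{L}_{(\mathbf{p},m)}$, and then cancelling the common multiplicity $[H_k:H_k^{\boldsymbol{\lambda}}]$ using semisimplicity. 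This is a faithful and complete elaboration of the paper's intended argument, with the dimension check via \eqref{eq5} as a sound consistency verification.
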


\begin{proof}
This follows directly from the construction of $\mathrm{L}_{(\mathbf{p},m)}$.
\end{proof}

\subsection{Schur-Weyl duality for $G( \ell,k,d) $}\label{s4.6}

For $n=\ell m$ with $m>d$ take $\mathbf{p}=(m,m,\dots,m)\in\mathbb{Z}_{>0}^\ell$ and consider 
the action of $\mathbf{GL}_{\mathbf{p}}$ on $V=\mathbb{C}^{n}$ as in Subsection~\ref{s3.2}. 
Consider further the free product $\mathbf{GL}_{\mathbf{p}}*\mathbb{Z}$ and let 
the generator $1$ of $\mathbb{Z}$ act on $\mathbb{C}^{n}$ by mapping $v_i$ to 
$v_{i+\frac{\ell}{k}m}$, with the convention that $v_{s}=v_{s-n}$ for $s>n$. Our main observation 
in this subsection is the following result (for $k=\ell$ the quantum version of this
result appears in \cite{HS}).

\begin{theorem}\label{thm95}
The action of  $\mathbf{GL}_{\mathbf{p}}*\mathbb{Z}$ and the restricted action of 
$A_{( \ell,k,d) }$ on $V^{\otimes d}$ have the double centralizer property 
\begin{displaymath}
\xymatrix{
V^{\otimes d} \ar@(dr,r)[]_{A_{( \ell,k,d) }} \ar@(dl,l)[]^{\mathrm{GL}_{\mathbf{p}}*\mathbb{Z}} 
} 
\end{displaymath}
in the sense that they generate each others centralizers. 
\end{theorem}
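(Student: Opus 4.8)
The plan is to reduce this to the already-established double centralizer statement of Theorem~\ref{thm12} for the larger algebra $A_{(\ell,d)}$ acting on $V^{\otimes d}$, by comparing the two centralizing algebras on both sides. Recall that Theorem~\ref{thm12} gives us that $A_{(\ell,d)}$ and $\mathbf{GL}_{\mathbf{p}}$ generate each others' centralizers inside $\mathrm{End}(V^{\otimes d})$. What we need now is the same for the smaller algebra $A_{(\ell,k,d)}\subset A_{(\ell,d)}$ (identified via $\Psi$, Lemma~\ref{lem61}) paired with the larger monoid $\mathbf{GL}_{\mathbf{p}}*\mathbb{Z}$. The key point is that adjoining the operator $T$ (the action of the generator $1$ of $\mathbb{Z}$, cyclically permuting the blocks $V_1,\dots,V_\ell$ by the shift $\theta_k$) exactly compensates for shrinking $A_{(\ell,d)}$ to its $H_k$-fixed subalgebra.

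First I would identify $T$ explicitly as an operator on $V^{\otimes d}$ and observe that conjugation by $T$ implements precisely the automorphism $\theta_k$ of $A_{(\ell,d)}$ from Subsection~\ref{s4.2}: since $T$ sends $V_i$ to $V_{i+\ell/k}$, for $\sigma\in\cG_{(\ell,d)}(f,g)$ one checks $T\,\mathrm{G}(\sigma_{(f,g)})\,T^{-1}=\mathrm{G}(\sigma_{(\theta_k f,\theta_k g)})$, and similarly on the idempotents $e_f$. Hence $A_{(\ell,k,d)}=\Psi(A_{(\ell,k,d)})$ is exactly the centralizer of $T$ inside $A_{(\ell,d)}$ — equivalently the $H_k$-fixed subalgebra, which is what Theorem~\ref{thm55} and the discussion around $D^{H_k}$ already tell us. The second ingredient is that $T$ commutes with $\mathbf{GL}_{\mathbf{p}}$: this is immediate because $\theta_k$ permutes the blocks and $\mathbf{GL}_{\mathbf{p}}=\mathbf{GL}_{k_1}\times\dots\times\mathbf{GL}_{k_\ell}$ with all $k_i=m$ equal, so conjugating by $T$ just permutes the factors, and on the ambient space the action is by the same diagonal-block matrices — but one must be slightly careful: $T$ does not centralize $\mathbf{GL}_{\mathbf{p}}$ pointwise, it only normalizes it. That is why we must take the free product: the algebra generated by $\mathbf{GL}_{\mathbf{p}}$ and $T$ in $\mathrm{End}(V^{\otimes d})$ is the relevant object.

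With these two observations the argument closes by a centralizer computation. Let $\mathcal{B}$ be the subalgebra of $\mathrm{End}(V^{\otimes d})$ generated by $\mathbf{GL}_{\mathbf{p}}$ and $T$. On one hand, $\mathrm{End}_{\mathcal B}(V^{\otimes d})=\mathrm{End}_{\mathbf{GL}_{\mathbf{p}}}(V^{\otimes d})^{T}$ (the operators commuting with $\mathbf{GL}_{\mathbf{p}}$ and with $T$); by Theorem~\ref{thm12} the first is the image of $A_{(\ell,d)}$, and intersecting with the commutant of $T$ gives exactly the image of $A_{(\ell,k,d)}$ by the previous paragraph. This proves one half: the centralizer of $\mathbf{GL}_{\mathbf{p}}*\mathbb{Z}$ equals the image of $A_{(\ell,k,d)}$. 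For the other half one uses the Double Centralizer Theorem (as in the proof of Theorem~\ref{thm12}): $A_{(\ell,k,d)}\cong\mathbb{C}[G(\ell,k,d)]$ is semisimple, hence its image in $\mathrm{End}(V^{\otimes d})$ is a semisimple subalgebra, and we have just shown it is the full centralizer of $\mathcal B$; therefore its own centralizer is the double centralizer of $\mathcal{B}$, which — since the action of the reductive group $\mathbf{GL}_{\mathbf{p}}$ is completely reducible and $T$ has finite order on each isotypic piece so $\mathcal B$ acts semisimply — equals $\mathcal B$ itself. This gives the double centralizer property.

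The main obstacle, and the step deserving genuine care, is the verification that the double commutant of $\mathcal B$ is $\mathcal B$ rather than something larger, i.e. that $\mathcal B$ acts semisimply on $V^{\otimes d}$: $\mathbf{GL}_{\mathbf{p}}$ is reductive so its action is semisimple, but $\mathcal B$ also contains the infinite-order element $T$, so one must argue that $T$ still acts semisimply on the (finite-dimensional) space $V^{\otimes d}$ — which it does, since $T^{k}$ commutes with everything relevant and $T$ satisfies $T^{N}=1$ for $N=\mathrm{lcm}$ of the relevant orders when acting on $V^{\otimes d}$ (indeed $T$ has order dividing $k$ up to the cyclic relabelling, so $T$ is diagonalizable). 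Once semisimplicity of $\mathcal B$ is in hand, the symmetric half of the Double Centralizer Theorem applies verbatim and the proof is complete; the remaining bookkeeping (that $\Psi$ and $\Phi^{-1}$ intertwine the module structures correctly, already handled in Lemmas~\ref{lem61} and Theorem~\ref{thm55}) is routine.
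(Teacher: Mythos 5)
Your proposal is correct and is essentially the paper's own argument, made more explicit: like the paper, you reduce to Theorem~\ref{thm12} via the observation that the generator of $\mathbb{Z}$ acts by permuting the blocks $V_1,\dots,V_\ell$ according to $\theta_k$, so that its conjugation action on the (faithful, since $m>d$) image of $A_{(\ell,d)}$ realizes the $H_k$-action and cuts the centralizer of $\mathbf{GL}_{\mathbf{p}}$ down to $\Psi(A_{(\ell,k,d)})=A_{(\ell,d)}^{H_k}$; where the paper finishes by ``locally comparing dimensions'', you instead close with the Double Centralizer Theorem, which is a legitimate (arguably cleaner) way to conclude. The one step you should tighten is the semisimplicity of the algebra $\mathcal{B}$ generated by $\mathbf{GL}_{\mathbf{p}}$ and $T$: diagonalizability of $T$ together with reductivity of $\mathbf{GL}_{\mathbf{p}}$ does not by itself make the generated algebra semisimple, but since $T^{k}=\mathrm{id}$ on $V$ and $T$ normalizes $\mathbf{GL}_{\mathbf{p}}$ (all blocks have equal size $m$), the group generated is $\mathbf{GL}_{\mathbf{p}}\rtimes C_{k}$, a finite extension of a reductive group, whose finite-dimensional complex representations are semisimple, so $\mathcal{B}$ indeed acts semisimply on $V^{\otimes d}$.
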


\begin{proof}
Consider the action of $\cG_{(l,k)}$ on $V^{\otimes d}$ as described in Subsection~\ref{s3.3}.
The left action of the additional group $\mathbb{Z}$ corresponds to the permutation of colors
as given by the group $H_k$. Therefore the subalgebra $A_{( \ell,k,d) }$ of $H_k$-invariants in
$A_{(\ell,d)}$ centralizes the action of $\mathbf{GL}_{\mathbf{p}}*\mathbb{Z}$. Since the action 
of $H_k$ is free and the action of $A_{(\ell,d)}$, and hence also of $A_{( \ell,k,d) }$, is faithful
(see Subsection~\ref{s3.7}), the double centralizer property follows by locally comparing 
dimensions (as the left action increased by a factor of $k$ while the right action decreased 
by the same factor). 
\end{proof}


\noindent
Volodymyr Mazorchuk, Department of Mathematics, Uppsala University,
Box 480, 751 06, Uppsala, SWEDEN, {\tt mazor\symbol{64}math.uu.se}

\noindent
Catharina Stroppel, Mathematisches Institut, Universit{\"a}t Bonn,\\
Endenicher Allee 60, D-53115, Bonn, GERMANY,\\
{\tt stroppel\symbol{64}math.uni-bonn.de}
\end{document}